\newtheorem{thm}{Theorem}[section]
\newtheorem{cor}[thm]{Corollary}
\newtheorem{lem}[thm]{Lemma}
\newtheorem{prop}[thm]{Proposition}
\newtheorem{proc}[thm]{Procedure}
\theoremstyle{definition}
\theoremstyle{remark}
\numberwithin{equation}{section}
\begin{document}

\title{Semistandard Tableaux for Demazure Characters \\ (Key Polynomials) and Their Atoms}

\author[RAP]{Robert A. Proctor}
\ead{rap@email.unc.edu}

\author[MJW]{Matthew J. Willis\fnref{fn1}}
\ead{mwillis1@conncoll.edu}

\address[RAP]{Dept. of Mathematics, University of North Carolina, Chapel Hill, NC 27599, USA}

\address[MJW]{Dept. of Mathematics, Hampden-Sydney College, Hampden-Sydney, VA 23943, USA}

\fntext[fn1]{Present Address: Dept. of Mathematics, Connecticut College, New London, CT 06320, USA}

\begin{abstract}
The Schur function indexed by a partition $\lambda$ with at most $n$ parts is the sum of the weight monomials for the Young tableaux of shape $\lambda$.  Let $\pi$ be an $n$-permutation.  We give two descriptions of the tableaux that contribute their monomials to the key polynomial indexed by $\pi$ and $\lambda$.  (These polynomials are the characters of the Demazure modules for $GL(n)$.)  The ``atom'' indexed by $\pi$ is the sum of weight monomials of the tableaux whose right keys are the ``key'' tableau for $\pi$.  Schur functions and key polynomials can be decomposed into sums of atoms.  We also describe the tableaux that contribute to an atom, the tableaux that have a left key equal to a given key, and the tableaux that have a left key bounded below by a given key.
\end{abstract}

\begin{keyword}
key polynomial \sep Demazure character \sep atom \sep right key \sep left key \sep semistandard tableau

\MSC{05E05, 05E10, 17B10}
\end{keyword}

\maketitle

\section{Introduction}

The core of this paper, Sections 4 - 9, is accessible to any mathematician.  After technical definitions are given in Section 2, the main definitions and details for the background material mentioned here appear in Section 3 (which is a second introductory section).

We think of ``Demazure'' (key) polynomials as being ``partial Schur functions'':  The Schur function $s_\lambda (x)$ is the sum of weight monomials for the semistandard tableaux of shape $\lambda$.  Via the notion of ``right key'', specification of an $n$-permutation $\pi$ determines a certain subset of those tableaux;  the sum of their weight monomials is the Demazure polynomial we denote $d_\lambda (\pi;x)$.  These polynomials give a filtration for $s_\lambda(x)$ indexed by the Bruhat order:  As $\pi$ increases, more of the monomials for $s_\lambda(x)$ are incorporated into $d_\lambda(\pi;x)$.

But in the big view it seems best to take the definition of Demazure polynomial to be the result of applying a sequence of divided difference operators corresponding to $\pi$ to a weight monomial specified by $\lambda$:  When studying flag varieties, Demazure developed \cite{De1} this formula to describe certain characters of a Borel subgroup of any semisimple Lie group. By 1990 Lascoux and Sch\"{u}tzenberger \cite{LS2} had developed a combinatorial description of these polynomials using the plactic algebra.  A central notion in their work was that of the right key of a given semistandard tableau.  They proved that $d_\lambda(\pi;x)$ arises when a tableau is allowed to contribute its monomial if and only if its right key is dominated by the tableau corresponding to $\pi$.  We quote this result in Theorem 3.1.

The second-listed author of this paper gave a simpler method for finding the right key of a tableau \cite{Wi2}.  Here we use his ``scanning'' method to present two new descriptions of these contributing ``Demazure tableaux'' which seem to be more direct and more accessible than those available.  Our descriptions of the possible tableau values for a given location depend upon the values of the tableau in the columns ``to the east'', or upon the values of the tableau in the locations ``to the southwest''.

Our main result Theorem 10.1 generalizes the following obvious proposition from semistandard tableaux to Demazure tableaux.  Let $\lambda$ be an $n$-partition.  Using the reversed (column, row) indexing of Section 2, for each $(l,k) \in \lambda$ set $\mathcal{Z}^{SW}_\lambda(T;l,k) := [T(l-1,k), T(l,k+1) -1]$ and $\mathcal{Z}^{SE}_\lambda(T;l,k) := [k,  min\{$ $T(l,k+1)-1, T(l+1,k) \} ]$.

\begin{prop}\label{prop:sstd}A tableau on the shape $\lambda$ is semistandard if and only if either of the following conditions is satisfied:

\noindent (a)  For all $(l,k) \in \lambda$ one has $T(l,k) \in \mathcal{Z}^{SE}_\lambda(T;l,k)$.

\noindent (b)  For all $(l,k) \in \lambda$ one has $T(l,k) \in \mathcal{Z}^{SW}_\lambda(T;l,k)$. \end{prop}

Lascoux also developed some other related notions and polynomials (mostly with Sch\"{u}tzenberger, but also more recently).  The sums of the monomials of the tableaux whose right keys are exactly a given key were also considered in \cite{LS2};  there they were also described with actions of operators.  Following Mason \cite{Mas}, we refer to these polynomials as ``atoms''.  Schur functions and key polynomials can be expressed as sums of atoms, where the sums run over certain permutations according to Bruhat orders.  The notion of the ``left key'' of a tableau was developed in \cite{LS1}.  In that paper Lascoux and Sch\"{u}tzenberger considered the tableaux whose left key is one specified key and whose right key is another specified key.  All of these considerations would lead us to initially consider eight tableaux description problems:  (Right or Left key of the tableaux) $\times$ (is Bounded by or is Equal to a given key) $\times$ (referring to values to the East or to the SouthWest).  In addition to the (R,B,E) and (R,B,SW) descriptions mentioned above, we also present (R, Eq, E), (L, Eq, SW), and (L, B, SW) descriptions.  These five (eight) descriptions can now (could then) be combined in various ways.  One can combine our (R, Eq, E) and (L, Eq, SW) descriptions to describe the tableaux of \cite{LS1} mentioned above.  To be nonzero, these polynomials should be indexed by intervals in Bruhat orders.  Our (R, B, SW) and (L, B, SW) descriptions can be combined in a more practical fashion to describe a generalization of Demazure polynomials that would be indexed by intervals in Bruhat orders.

Each of the five tableau theorems in Sections 5, 6, 8, and 9 is a generalization of or an analog of Proposition \ref{prop:sstd}.  There are two viewpoints for each of these theorems:  First, each result can be viewed as a ``theoretical'' characterization of the tableaux at hand.  Theorem 5.1 appears to currently be the most direct characterization of Demazure tableaux available.  This characterization is used in a sequel to this paper to prove the ``convex polytope'' result mentioned below.  Second, each result can be viewed as indicating a recursive procedure for constructing the tableaux at hand.  Section 7 presents an outline of the procedure corresponding to Theorem 6.1.

Here are some combinatorial descriptions of right keys and/or Demazure polynomials and/or atoms (that are specific to Type A):  Theorem 4.3 of \cite{LS2}, Theorems 1, 2, 5(1)(2)(3), and 6 of \cite{RS1}, Section 3 of \cite{RS2}, Appendix A.5 of \cite{Ful}, Theorems 4.1, 4.2, 4.7 and 4.10 of \cite{Le1}, Theorems 3 and 8 of \cite{Ava}, Theorem 1.2 and Corollary 5.1 of \cite{Mas}, Section 12.8 of \cite{LB}, Theorem 3.3.2 and Proposition 3.4.3 of \cite{Fer}, and Definition 4.3 of \cite{HLMvW}.  The Lakshmibai-Musili-Seshadri ``lifting'' criteria for Demazure tableaux is Definition 12.8.6 of \cite{LB}.  In Section 3 of \cite{RS2}, Reiner and Shimozono indicated how a minimal lifting of a given tableau could be found with a series of jeu de taquin ``two column swaps'', thereby computing its right key.  Our \cite{Wi2} instead justified this column swap method in terms of the ``frank'' tableau approach that is presented in \cite{Ful}, and then introduced the scanning method to more directly describe the result of the column swaps.  Some generalizations of the concepts of right and left keys to general Lie type are mentioned at the end of the appendix.

Demazure characters have been widely studied.   Why are atoms of interest?  For the study of symmetric polynomials such as the Macdonald polynomials, there has been a growing realization that it can be useful to broaden one's considerations to include closely related nonsymmetric polynomials.  Atoms have arisen as certain specializations of nonsymmetric Macdonald polynomials \cite{Ion} \cite{HHL} \cite{Mas} \cite{HLMvW} \cite{Fer}.  Haglund, Haiman, and Loehr referred to atoms as ``nonsymmetric Schur functions''.   Mason's combinatorial description of atoms here helped lead to our \cite{Wi2}.  Lascoux was recently studying Demazure, Schubert, Grothendieck, and nonsymmetric Macdonald polynomials from the viewpoint of divided difference operators.   When doing experiments in this context, one must express the empirical results in terms of the polynomials in some basis.  Here he found (personal communication) atoms to form a particularly useful basis for all polynomials that generalizes the basis of Schur functions for symmetric polynomials.  Combinatorial descriptions of atoms such as our Theorem 6.1 give finer information than do plactic or polynomial recursions.

Reiner and Shimozono's Theorem 25 of \cite{RS1} and Postnikov and Stanley's Theorem 14.1 of \cite{PS} related Demazure polynomials to flagged Schur functions for certain $\pi$.  Postnikov and Stanley then remarked that the sets of Gelfand patterns for the flagged Schur functions that arise in this way form convex polytopes.  In \cite{PW} we use Theorem 5.1 below to prove that the set of Demazure tableaux for $(\lambda, \pi)$ forms a convex polytope if and only if $\pi$ is ``$\lambda$-312 avoiding''.  A byproduct is a sharpening of the Theorem 25 of \cite{RS1} description of the relationship between Demazure polynomials and flagged Schur functions.  Also, this relationship is now stated at the tableau level.

Although the polynomials have provided the motivation, our results are set entirely within the finer context of tableaux.  The notions of right and left keys were reduced to two scanning descriptions in \cite{Wi2}.  So the core of this paper is concerned with comparing the tableau output of a scanning method to a given key tableau.  In Section 4 we present the scanning method for finding the right  key.  In Sections 5-9 we state and prove our descriptions of sets of tableaux that are constrained by given keys using our ``insider'' language of scanning tableaux.  In Section 10 we summarize our results for ``outsiders'' in terms of left and right keys and polynomials.  The optional appendix places Demazure polynomials into the representation theory context of \cite{Hum}.   All algebraic matters (including the actions of the symmetric group) are also deferred to the appendix, since these are not needed for our work with tableaux.

\section{Basic definitions and notation}

Let $p, q \in \mathbb{Z}$.  Set $[p,q] := \{ p, p+1, ... , q \}$.  Throughout the paper some $n \geq 1$ is fixed.  Set $[n] := [1, n]$ and $(n) = (1, 2, ... , n)$.

An \textit{$n$-partition} $\lambda$ is an $n$-tuple $(\lambda_1, \lambda_2, ... , \lambda_n )$ of integers with $\lambda_1 \geq \lambda_2 \geq ... \geq \lambda_n \geq 0$. Let $\Lambda_n^+$ denote the set of all $n$-partitions.  An \textit{$n$-permutation} $\pi$ is an $n$-tuple $(\pi_1, \pi_2, ... , \pi_n)$ with distinct entries from $[n]$.  Denote the \emph{set} of all such $n$-tuples by $S_n$.

Fix an $n$-permutation $\pi$.  For $1 \leq i \leq n-1$, define $s_i.\pi$ to be the $n$-tuple formed from $\pi$ by interchanging the values $i$ and $i+1$, wherever they may appear.  Given a sequence $i_1, ... , i_t$ for some $t \geq 1$, define $s_{i_t}...s_{i_1}.\pi := s_{i_t}.(...(s_{i_2}.(s_{i_1}.\pi))...)$.  If the \emph{composition} $s_{i_t}...s_{i_1}$ is such that $s_{i_t}...s_{i_1}.(n) = \pi$ with $t$ minimal, we say that it is a \emph{reduced composition} for $\pi$.  (Although $S_n$ does not need to be regarded as a group for the work performed in this paper, the appendix does present two action models for the symmetric group.)  Let $\tau_0$ denote the ``longest'' $n$-permutation $(n, n-1, ... , 2 , 1)$.

Let $x_1, ... , x_n$ be variables.  Let $P(x)$ be a polynomial in $x_1, ... , x_n$.  Re-use the symbols $s_i$ for $1 \leq i \leq n-1$ and define $s_i.P(x)$ to be the polynomial obtained by interchanging $x_i$ and $x_{i+1}$ in $P(x)$.  For $1 \leq i \leq n-1$ also define operators $\rho_i := (x_i - x_{i+1})^{-1} \circ (1-s_i) \circ x_i$ (multiply, swap, subtract, then divide) and $\bar{\rho_i} := \rho_i - 1$.  Within a monomial $x_1^{b_1} \cdots x_i^{b_i}x_{i+1}^{b_{i+1}} \cdots x_n^{b_n}$, if $b_i \geq b_{i+1}$ then the ``local symmetrizing'' operator $\rho_i$ replaces the $x_i^{b_i}x_{i+1}^{b_{i+1}}$ factors with the ``locally symmetric string'' that ``connects'' $x_i^{b_i}x_{i+1}^{b_{i+1}}$ to $x_i^{b_{i+1}}x_{i+1}^{b_{i}}$.  For example, suppose $n=4$.  Using unsubscripted variable names such as $x := x_2$ for readability, we have $\rho_2.w^3x^7y^4z^9 = [\frac{1-s_2}{x-y}x].w^3x^7y^4z^9 = w^3(x^7y^4 + x^6y^5 + x^5y^6 + x^4y^7)z^9$.  The operator $\bar{\rho_i}$ omits the first term.  Note that if $b_i = b_{i+1}$, then $\rho_i$ fixes $x_i^{b_1}...x_i^{b_i}x_{i+1}^{b_{i+1}}...x_n^{b_n}$ and $\bar{\rho_i}$ annihilates it.

Fix $\lambda \in \Lambda_n^+$.  The \textit{Young diagram} (or \textit{shape}) of  $\lambda$,  also denoted $\lambda$,  consists of  $\lambda_i$ left justified boxes in the  $i^{th}$ row for  $1 \leq i \leq n$.  Set $| \lambda | := \lambda_1 + \lambda_2 + ... + \lambda_n$.  To emphasize the importance of columns over rows,  the box in the  $j^{th}$ column and the  $i^{th}$ row is denoted $(j,i) \in \lambda$.  As in \cite{Wi2},  the column lengths of  $\lambda$ are denoted  $\zeta_1, \zeta_2, ... ,  \zeta_{\lambda_1}$.  A \textit{semistandard tableau}  $T$  \textit{of shape}  $\lambda$  is a filling of  $\lambda$  with elements of  $[n]$  such that its values  $T(j,i)$  satisfy  $T(j,i) \leq  T(j+1,i)$  and  $T(j,i) < T(j,i+1)$.  Use the value $k$ when $T(l-1,k)$ is referenced with $l=1$, use the value $n$ when $T(l+1,k)$ is referenced with $l = \lambda_k$, and use the value $n+1$ when $T(l,k+1)$ is referenced with $k = \zeta_l$.  Let  $\mathcal{T}_\lambda$  denote the set of all semistandard tableau of shape  $\lambda$.  For  $T, U \in \mathcal{T}_\lambda$,  we write  $T \leq U$  if  $T(j,i) \leq  U(j,i)$  for all  $(j,i) \in \lambda$;  here we say $T$ is \textit{dominated by} $U$.  For $T \in \mathcal{T}_\lambda$, let $m(T)$ denote the maximum of the values that appear at the bottoms of the columns of $T$ and let $max(T)$ denote the maximum of the values in $T$.  Clearly $max(T) = m(T)$.  For the empty tableau $(())$, define $m( \hspace{1mm} (()) \hspace{1mm} ) := 1$.  Given $T \in \mathcal{T}_\lambda$, its \textit{weight monomial} is $x^T := \prod_{i=1}^n x_i^{c_i}$, where $c_i$ is the number of values in $T$ equal to $i$.  A tableau $T \in \mathcal{T}_\lambda$ is a \textit{key} if the values in a column also appear in every column to the west of that column.  Given $\pi \in S_n$, the  $\lambda$-\textit{key} of  $\pi$  is the semistandard tableau  $Y_\lambda(\pi)$  of shape  $\lambda$ whose $j^{th}$ column is obtained by sorting  $\pi_1, \pi_2, ... , \pi_{\zeta_j}$  into ascending order and then entering these values from top to bottom.  The $(5,5,3,3,2,1)$-key of $(6,9,4,5,3,2,1,7,8)$ is the fourth tableau in Figure 1 below.  The key $Y_\lambda(\tau_0)$ is the unique maximal element of $\mathcal{T}_\lambda$.

To obtain an irredundant indexing of the Demazure polynomials, it is necessary to restrict the choice of $\pi$ relative to the $\lambda$ at hand:   Fix some $\lambda \in \Lambda_n^+$.  Let $q_1 < q_2 < ... < q_k$ for some $k \geq 0$ denote the distinct columns lengths of $\lambda$.  Set $Q_\lambda := \{ q_1, ... , q_k \} = \{ \zeta_1, ... , \zeta_{\lambda_1} \}$.  Set $q_0 := 0$ and $q_{k+1} := n$.  Note that for $1 \leq i \leq n-1$, we have $\lambda_i = \lambda_{i+1}$ if and only if $i \notin Q_\lambda$.  Let $S_n^\lambda$ denote the set of all $n$-permutations $\pi$ such that whenever $i, j \in [q_{r-1} + 1, q_r]$ with $i < j$ for some $1 \leq r \leq k+1$, then $\pi_i < \pi_j$.  Note that $| S_n^\lambda | = \frac{n!}{q_1!(q_2-q_1)! \cdots (n-q_k)!}$.  One has $Q_\lambda \supseteq [n-1]$ if and only if the parts of $\lambda$ are distinct.  Hence $S_n^\lambda = S_n$ if and only if $\lambda_1 > \lambda_2 > ... > \lambda_n \geq 0$.  The formation of the keys of shape $\lambda$ of the elements of $S_n^\lambda$ defines a bijection to the set of all keys of shape $\lambda$.  This formation process also defines a projection from $S_n$ to $S_n^\lambda$;   it is described in the appendix.  There it is noted that the dominance ordering of the keys for $S_n^\lambda$ describes the Bruhat ordering on the $W^\lambda$ ``quotient'' manifestation of $S_n^\lambda$.  We borrow the semidirect product symbol to denote the subset of $\Lambda_n^+ \times S_n$ consisting of all $(\lambda, \pi)$ such that $\pi \in S_n^\lambda$:  This restriction of the set product is denoted with the visually suggestive $\Lambda_n^+ \rtimes S_n^\lambda$ (rather than with $\Lambda_n^+ \times \hspace{-.5mm} |_\lambda \hspace{1mm} S_n$).

\section{Cited results;  Demazure polynomial and tableau definitions}

Fix $(\lambda, \pi) \in \Lambda_n^+ \times S_n$.  Let $s_{i_t}...s_{i_2}s_{i_1}$ be reduced for $\pi$.  The operators $s_i$ satisfy $s_i s_{i+1} s_i = s_{i+1} s_i s_{i+1}$ for $1 \leq i \leq n-1$ and $s_is_j = s_js_i$ for $i,j \in [n]$ with $|i-j|>1$, and these relations can be used to relate any two reduced compositions for $\pi$.  We take the Demazure character formula as our definition of the \textit{Demazure polynomial}; that is $d_\lambda(\pi;x) := \rho_{i_t}...\rho_{i_2}\rho_{i_1}.x_1^{\lambda_1}x_2^{\lambda_2} \cdots x_n^{\lambda_n}$.  Since the analogous relations $\rho_i \rho_{i+1} \rho_i = \rho_{i+1} \rho_i \rho_{i+1}$ and $\rho_i\rho_j = \rho_j\rho_i$ also hold, these polynomials are well-defined functions of $\pi$ (and $\lambda$).

For a tableau $T \in \mathcal{T}_\lambda$, the \textit{right key} $R(T)$ is a certain key in $\mathcal{T}_\lambda$ that can be defined using a jeu de taquin process, as in Appendix A.5 of \cite{Ful}.  The following result of \cite{LS2} appeared as Theorem 1 in \cite{RS1}:

\begin{thm}The Demazure polynomial $d_\lambda(\pi;x)$ is the sum of the weight monomials $x^T$ for $T \in \mathcal{T}_\lambda$ such that $R(T) \leq Y_\lambda(\pi)$.\end{thm}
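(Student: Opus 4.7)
The strategy is to induct on the length $\ell(\pi)$, mirroring the definition of $d_\lambda(\pi;x)$ via compositions of $\rho_i$-operators. For the base case $\pi = (n)$, we have $\ell(\pi) = 0$ and $d_\lambda(\pi;x) = x_1^{\lambda_1}\cdots x_n^{\lambda_n}$ by definition. The key $Y_\lambda((n))$ has column $j$ filled top-to-bottom with $1, 2, \ldots, \zeta_j$, which is the unique minimum of $\mathcal{T}_\lambda$ in the dominance order. Since $T \leq R(T)$ always holds, any $T$ with $R(T) \leq Y_\lambda((n))$ must equal this minimum, contributing exactly the monomial $x_1^{\lambda_1} \cdots x_n^{\lambda_n}$.

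For the inductive step, suppose the statement holds for $\pi$, and take $\pi' = s_i.\pi$ with $\ell(\pi') = \ell(\pi) + 1$. Then $d_\lambda(\pi';x) = \rho_i \cdot d_\lambda(\pi;x)$, so by the inductive hypothesis it suffices to prove that $\rho_i$ sends $\sum_{R(T) \leq Y_\lambda(\pi)} x^T$ to $\sum_{R(T) \leq Y_\lambda(\pi')} x^T$. One first checks that $\ell(\pi') > \ell(\pi)$ forces $Y_\lambda(\pi) \leq Y_\lambda(\pi')$, so the second index set contains the first.

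The key combinatorial step would be to partition the tableaux $T$ satisfying $R(T) \leq Y_\lambda(\pi')$ into ``$s_i$-strings'' of the Bender–Knuth/crystal type: chains $T_0, T_1, \ldots, T_m$ in which successive members differ by a prescribed toggle of $i$'s and $(i+1)$'s, the weights run as $x^{T_j} = x^{T_0}(x_{i+1}/x_i)^j$ with the $x_i$-exponent of $x^{T_0}$ exceeding that of $x_{i+1}$ by exactly $m$, and $T_0$ is the unique string member whose right key is bounded by the smaller key $Y_\lambda(\pi)$. Given such a decomposition, applying $\rho_i$ to $x^{T_0}$ produces $x^{T_0} + x^{T_1} + \cdots + x^{T_m}$ by the local-symmetrization description given in Section 2, and summing over all strings completes the induction.

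The main obstacle is establishing this string decomposition and its compatibility with the right-key constraint: one must verify that each $T_j$ in a string satisfies $R(T_j) \leq Y_\lambda(\pi')$ while exactly $T_0$ satisfies $R(T_0) \leq Y_\lambda(\pi)$. Conceptually this is the content of Kashiwara's theorem that Demazure crystals are closed under truncated $f_i$-chains, and it can be proved abstractly by identifying $\{T : R(T) \leq Y_\lambda(\pi)\}$ with the Demazure subcrystal $B_\lambda(\pi)$. A more hands-on route, closer to the spirit of this paper, would track the effect of a single Bender–Knuth toggle on the right key via the scanning method of \cite{Wi1}, showing directly that the scanning paths are perturbed in a controlled way when $i \leftrightarrow i+1$ swaps are performed. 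Either route closes the induction and yields Theorem 3.1 as stated.
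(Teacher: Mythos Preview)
The paper does not prove this theorem. It is quoted from \cite{LS2} (in the form stated as Theorem~1 of \cite{RS1}); the paper takes it as input in order to define Demazure tableaux, and then develops its own scanning-based descriptions of those tableaux in Sections~5--9. So there is no ``paper's own proof'' to compare your attempt against.

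As for the proposal itself: the induction on $\ell(\pi)$ via the operators $\rho_i$ is the standard route and it does work, but one of your claims is misstated. It is not true that each $i$-string $\{T_0,\ldots,T_m\}$ inside $\mathcal{D}_\lambda(\pi')$ meets $\mathcal{D}_\lambda(\pi)$ in exactly the single element $T_0$. Already for $n=3$, $\lambda=(2,1,0)$, with $\pi$ corresponding to $s_2s_1$ and $\pi'=\tau_0$, the $1$-string of length two through the highest-weight tableau lies entirely in $\mathcal{D}_\lambda(\pi)$. The correct statement is a dichotomy: the intersection is either the singleton $\{T_0\}$ or the whole string. (If it were a proper top segment of length at least two, applying $\rho_i$ to that segment's weight sum would strictly overshoot the full-string sum and the induction would break.) With this correction the argument goes through: on a full string the weight sum is $s_i$-symmetric, so $\rho_i$ fixes it; on a singleton $\{T_0\}$ the operator $\rho_i$ expands $x^{T_0}$ to the full string sum. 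Establishing that dichotomy for the right-key filtration is exactly the substantive step you flag at the end; Lascoux--Sch\"{u}tzenberger's original argument in \cite{LS2} runs through the plactic monoid rather than crystal strings, and either route is real work, not a routine check.
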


\noindent Hence we say that $T \in \mathcal{T}_\lambda$ is a \textit{Demazure tableau for $\pi$} if $R(T) \leq Y_\lambda(\pi)$.  Let $\mathcal{D}_\lambda(\pi)$ denote the set of such tableaux.  Reiner and Shimozono referred to the polynomials $d_\lambda(\pi;x)$ as the ``key polynomials'' $\kappa_\alpha (x)$ for ``compositions'' $\alpha \in \mathbb{N}^n$.  Our definition of the $d_\lambda(\pi;x)$ largely follows their definition of the $\kappa_\alpha (x)$.  Their Theorem 1 can be obtained from the second identity stated in Theorem 4.3 of \cite{LS2} by extracting the terms of degree $| \lambda |$ and projecting the resulting identity to polynomials in $n$ commuting variables.  With respect to $s_\lambda(x) = \sum_{T \in \mathcal{T}_\lambda} x^T$, one can view a Demazure polynomial as a ``partial Schur function''.  Since $Y_\lambda(\tau_0)$ is the unique maximal element of $\mathcal{T}_\lambda$, we have $R(T) \leq Y_\lambda(\tau_0)$ for all $T \in \mathcal{T}_\lambda$.  Thus $s_\lambda(x)$ is the Demazure polynomial $d_\lambda(\tau_0;x)$.

We define the \textit{atom} $c_\lambda(\pi;x) := \bar{\rho}_{i_t} ... \bar{\rho}_{i_2} \bar{\rho}_{i_1}.x_1^{\lambda_1}x_2^{\lambda_2} \cdots x_n^{\lambda_n}$.  This notion is well-defined by similar reasoning.  The following result is a consequence of Theorem 3.8 of \cite{LS2}:

\begin{thm}If $\pi \in S_n^\lambda$, the atom $c_\lambda(\pi;x)$ is the sum of the weight monomials $x^T$ for $T \in \mathcal{T}_\lambda$ such that $R(T) = Y_\lambda(\pi)$.\end{thm}

\noindent In the appendix it is noted that $c_\lambda(\pi;x) \neq 0$ if and only if $\pi \in S_n^\lambda$.

Fix $(\lambda, \pi) \in \Lambda_n^+ \rtimes S_n^\lambda$.  We say that $T \in \mathcal{T}_\lambda$ is an \textit{exact Demazure tableau at $\pi$} if $R(T) = Y_\lambda(\pi)$.  Let $\mathcal{C}_\lambda(\pi)$ denote the set of such tableaux. In \cite{LS2}, the element of the free algebra that projected to $c_\lambda(\pi;x)$ was called a ``standard basis''.  Our development here reverses the roles of ``definition'' and ``theorem'' for standard bases played by Definition 3.7 and Theorem 3.8 of \cite{LS2}.  The set $\mathcal{D}_\lambda(\pi)$ is the union of the sets $\mathcal{C}_\lambda(\pi^\prime)$ over $\pi^\prime \in S_n^\lambda$ such that $Y_\lambda(\pi^\prime) \leq Y_\lambda(\pi)$.  The analogous polynomial statement is $d_\lambda(\pi;x) = \sum c_\lambda(\pi^\prime; x)$.  In particular, one has $s_\lambda(x) = \sum c_\lambda(\pi^\prime;x)$, where the sum is over all $\pi^\prime \in S_n^\lambda$.

Let $\pi$ and $\pi^\prime$ be any two $n$-permutations.  As in the appendix, let $w$ and $w^\prime$ be the corresponding Weyl group elements.  The structures that provide the environment in which the entities of this paper are defined, the Demazure modules $D_\lambda(w)$, can be created for unrestricted $\pi$.  However, $D_\lambda(w) = D_\lambda(w^\prime)$ if and only if $w.\lambda = w^\prime.\lambda$, and the stabilizer of $\lambda$ is non-trivial if and only if $\lambda$ does not have distinct parts.  One also has $d_\lambda(\pi;x) = d_\lambda(\pi^\prime;x)$ if and only if $w.\lambda = w^\prime.\lambda$.  But the situation for atoms is different.  One of the referees for this paper caught the following error:  The restriction $\pi \in S_n^\lambda$ had to be added to Theorem 3.2 and Corollaries 10.4 and 10.6 since $c_\lambda(\pi;x) = 0$ when $\pi \notin S_n^\lambda$.  Nonetheless, none of our six theorems in Section 5 - 9 (which pertain to tableaux) really need the restriction $\pi \in S_n^\lambda$ for their statements or for their proofs.  There is a natural bijection from $\Lambda_n^+ \rtimes S_n^\lambda$ to $\mathbb{N}^n$ (where $\mathbb{N} := \{ 0, 1, 2, ... \} )$.  In fact, the key polynomials of [RS1] are indexed by elements of the latter set and it could be argued that $\mathbb{N}^n$ is the more natural indexing set for Demazure polynomials.  But, for a fixed shape $\lambda$, the goal of this paper is to identify the relevant semistandard tableaux of that shape.  Moreover, Demazure polynomials may at times be defined elsewhere for general $\pi \in S_n$.  Thus we will emphasize $\lambda$ in our notation, and in this edition we impose the requirement of $(\lambda, \pi) \in \Lambda_n^+ \rtimes S_n^\lambda$ when atoms or their related structures are present.

According to Corollary 7 of [RS1], as $(\lambda, \pi)$ runs through $\Lambda_n^+ \rtimes S_n^\lambda$ the set $\{ d_\lambda(\pi;x) \}$ (and hence the set $\{ c_\lambda(\pi;x) \}$) forms an integral basis for $\mathbb{Z}[x_1,...,x_n]$.  If $\lambda = (1, 0, ... , 0)$, the atoms are $x_1, x_2, ... , x_n$.

The first paper in this series gave a ``scanning method'' for computing the right key $R(T)$ of a tableau $T$.  This method is described in the next section;  its output is denoted $S(T)$.  Here is Theorem 4.5 of \cite{Wi2}:

\begin{thm}Let $T \in \mathcal{T}_\lambda$.  Then $R(T) = S(T)$.\end{thm}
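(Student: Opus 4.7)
The plan is to prove $R(T) = S(T)$ by induction on the number of columns $\lambda_1$ of $\lambda$, at each stage matching the output of the scanning procedure against the jeu de taquin construction of the right key (as presented in Appendix A.5 of \cite{Ful}).

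The base case $\lambda_1 = 1$ is immediate: a single-column $T$ is already a key, so $R(T) = T$, and one checks from the definition that $S(T) = T$ as well. For the inductive step, I would first observe that the rightmost column of $R(T)$ equals the rightmost column of $T$---a standard fact about the jeu de taquin construction---and verify the analogous fact for $S(T)$ directly from the scanning definition. It therefore suffices to prove, for each $j < \lambda_1$, that the $j$-th columns of $R(T)$ and $S(T)$ agree.

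For the $j$-th column, the jeu de taquin computation amounts to considering the two-column sub-tableau formed by column $j$ of $T$ together with the $(j{+}1)$-st column of $R(T)$ (known by induction applied to the tableau obtained by deleting column $j$), and performing a final slide to extract the next key column. I would then establish a local lemma showing that each entry of column $j$ is routed to its destination in $R(T)$ along a path which is exactly the path traced by a single scan in the procedure of \cite{Wi1}. The scanning rule---selecting a distinguished entry in each successive column according to a specified greedy criterion---should encode precisely the choice made by a jeu de taquin slide once semistandardness is invoked.

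The main obstacle is this local matching. Jeu de taquin slides are global in nature and can cascade across many rows, so pinning down which trajectory a given entry follows requires care. A natural route is to verify the matching first in the two-column case, where slides simplify dramatically, and then lift it to the general case using the inductive hypothesis together with the observation that the rightmost column of $R(T)$ already encodes the cumulative effect of the earlier slides on subsequent rightward columns. Once this local matching is in place, the equality $R(T) = S(T)$ follows at once by induction on $\lambda_1$.
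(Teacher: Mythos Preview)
This theorem is not proved in the present paper: it is quoted verbatim as Theorem~4.5 of \cite{Wi1}, with no argument supplied here. There is therefore no proof in this paper against which to compare your proposal; the actual proof resides in \cite{Wi1}.

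Regarding the proposal itself, it is a plan rather than a proof, and the plan has a real gap at its center. The decisive assertion is that ``the jeu de taquin computation amounts to considering the two-column sub-tableau formed by column $j$ of $T$ together with the $(j{+}1)$-st column of $R(T)$ \ldots\ and performing a final slide to extract the next key column.'' This recursive description of the right key is not a standard fact one may simply invoke; establishing it is essentially equivalent to the theorem. The jeu de taquin definition of $R(T)$ in \cite{Ful} involves rectifying skew tableaux built from several columns at once, and reducing that to a one-column-at-a-time recursion is where all the content lies. Your ``local lemma'' is announced but never formulated, and you yourself flag it as the main obstacle.

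The inductive framework is also imprecisely stated. You write ``known by induction applied to the tableau obtained by deleting column $j$,'' but deleting an interior column need not preserve semistandardness, and in any case the right key of such a truncation bears no obvious relation to $R(T)$. What you presumably intend is to restrict to columns $j, j{+}1, \ldots, \lambda_1$ and use that both $R(\cdot)$ and $S(\cdot)$ at column $j$ depend only on those columns; that reduction is correct, but it should be said explicitly, and it still leaves the two-column matching as the entire substance of the argument.
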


\noindent This view of $R(T)$ made the following known result readily apparent:

\begin{cor}Let $T \in \mathcal{T}_\lambda$.  Then $T \leq R(T) = S(T)$.\end{cor}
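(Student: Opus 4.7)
The plan is to invoke Theorem~3.3 to replace $R(T)$ with $S(T)$, after which it suffices to verify the entrywise inequality $T(j,i) \leq S(T)(j,i)$ for every $(j,i) \in \lambda$ directly from the structure of the scanning method to be given in Section~4.

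First I would unpack the construction of $S(T)(j,i)$: one initializes at the entry $T(j,i)$ in column $j$ and then iteratively applies a single ``scan step'' that moves the current scanned value one column to the right, continuing until the rightmost accessible column has been reached. The entry that terminates this chain is then recorded as the value of $S(T)$ at position $(j,i)$. Next I would establish the one substantive observation: a single scan step is weakly increasing, meaning that if the scan currently sits at an entry of value $v$ in column $c$, then the entry chosen in column $c+1$ has value at least $v$. This monotonicity is built into the selection rule of the scan, whose very purpose is to trace an eastward path whose values form a weakly increasing sequence. Chaining these inequalities across all columns visited, starting from $T(j,i)$, immediately yields $T(j,i) \leq S(T)(j,i)$, as required.

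The main obstacle is essentially organizational rather than mathematical: the whole argument relies on the precise definition of the scan to be given in Section~4. Once that definition is on the table, the monotonicity of a single scan step should be a matter of reading off the selection rule, and the corollary reduces to a single chain of weak inequalities. The authors' description of the statement as a ``known result'' made ``readily apparent'' by the scanning viewpoint is consistent with this: the real work was already done in setting up $S$ in \cite{Wi1}, not in deducing Corollary~3.4 from it.
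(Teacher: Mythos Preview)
Your proposal is correct and matches what the paper intends: the paper does not actually write out a proof of Corollary~3.4, but only remarks that the scanning viewpoint ``made the following known result readily apparent,'' and your argument is precisely the natural unpacking of that remark --- the EWIS defining $S(T;j,i)$ begins at $T(j,i)$ and is weakly increasing by construction, so its final value dominates $T(j,i)$.

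One small inaccuracy worth fixing: a single scan step does not in general move from column $c$ to column $c+1$, but to the \emph{earliest} column $h>c$ whose (remnant) bottom value is $\geq v$; columns may be skipped. This does not affect your argument --- if anything it makes the monotonicity even more immediate, since ``next value $\geq$ current value'' is literally the selection rule --- but you should phrase the scan step accordingly so that it agrees with the definition in Section~4.
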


Let $T \in \mathcal{T}_\lambda$.  The \textit{left key} $L(T)$ of $T$ is a certain key in $\mathcal{T}_\lambda$ that is defined and may be found in manners analogous to those for the right key \cite{Ful} \cite{Wi2}.  The scanning description in \cite{Wi2} easily confirms that $L(T) \leq T$.

\section{The scanning tableau S(T)}

Fix $\lambda \in \Lambda_n^+$ and a tableau $T \in \mathcal{T}_\lambda$.  Here we recall the scanning method of \cite{Wi2} for constructing the ``scanning tableau'' $S(T)$ of $T$.  Given a sequence of integers $(x_1, x_2, ...)$, define its \emph{earliest weakly increasing subsequence (EWIS)} to be $(x_{i_1}, x_{i_2}, ...)$, where $i_1 = 1$ and when $j > 1$ then $i_j$ is minimal such that $x_{i_{j-1}} \leq x_{i_j}$.

To follow the specification of this method, let $T$ be the first tableau in Figure 1.  Its ``scanning paths'' that originate in its first column will be indicated on the second tableau with the superscripts $a, b, ..., f$, and $S(T)$ will be the third tableau.

\begin{figure}[h!]
$$
\ytableausetup{boxsize=1.3em}
\begin{ytableau}
1 & 2 & 3 & 3 & 5 \\
2 & 3 & 4 & 8 & 9 \\
3 & 4 & 5 \\
4 & 6 & 8\\
5 & 8\\
7 \\
\end{ytableau}\hspace{2.5mm}
\begin{ytableau}
1^f & 2^e & 3^e & 3^e & 5^c \\
2^e & 3^d & 4^d & 8^a & 9^a \\
3^d & 4^c & 5^c \\
4^c & 6^b & 8^a\\
5^b & 8^a\\
7^a \\
\end{ytableau}\hspace{2.5mm}
\begin{ytableau}
1 & 3 & 3 & 5 & 5 \\
3 & 4 & 4 & 9 & 9 \\
4 & 5 & 5 \\
5 & 6 & 9\\
6 & 9\\
9 \\
\end{ytableau}\hspace{2.5mm}
\begin{ytableau}
2 & 3 & 4 & 6 & 6\\
3 & 4 & 5 & 9 & 9 \\
4 & 5 & 6 \\
5 & 6 & 9\\
6 & 9\\
9 \\
\end{ytableau}\hspace{2.5mm}
\begin{ytableau}
3 & 3 & 5 \\
4 \\
5 \\
\end{ytableau}
$$\caption{Tableaux for Section 4 and 5 examples.}
\end{figure}

Let $1 \leq l \leq \lambda_1$.  Create $T^{(l, \zeta_l)}$ from $T$ by removing the first $l-1$ columns from $T$ and $\lambda$, but retain the column indexing.  We compute the values in the $l^{th}$ column of $S(T)$ from $(l, \zeta_l)$ upwards:  Consider the column bottom values $T^{(l,\zeta_l)}(h, \zeta_h)$ for $l \leq h \leq \lambda_1$ as a sequence, and find its EWIS.  The sequence of locations that contain the values of this EWIS is the \emph{scanning path} for this location; it is denoted $P(T;l,\zeta_l)$.  The first member of $P(T;l,\zeta_l)$ is the location $(l,\zeta_l)$.  Begin to create $S(T)$ by defining the value $S(T;l,\zeta_l)$ to be the last value in this EWIS.  Next remove the boxes in $P(T; l, \zeta_l)$ from $\lambda$ and their values from $T$ to form what can be seen to be a smaller shape and a \textit{remnant} tableau $T^{(l;\zeta_l-1)}$.  Since $T^{(l;\zeta_l-1)}$ is semistandard, we may apply $S(\cdot)$ to it.    As $k$ decrements from $\zeta_l - 1$ to 1, continue to perform this process using the bottom values in the $l^{th}$ through $\lambda_1^{th}$ columns of the diminishing $T^{(l;k)}$ to produce the other $\zeta_l - 1$ scanning paths that originate in the $l^{th}$ column.  For such $k$, the path constructed with the selected column bottoms of $T^{(l;k)}$ is denoted $P(T;l,k)$, and $S(T;l,k)$ is defined to be the value in its final location.  Note that $S(T;l,k)$ is the largest of the column bottom values in $T^{(l,k)}$, i.e. the largest value in $T^{(l,k)}$.  Apply this process to all of the columns of $T$ to obtain the scanning value $S(T; l,k)$ for every $(l,k) \in \lambda$.  Define $U^{(l,k)}$ to be the tableau produced by removing the leftmost remaining column from $T^{(l,k)}$ and $\lambda$.  To summarize, with the second equality giving the form used in Sections 5 - 7:

\begin{lem}\label{lem:max}Let $\lambda \in \Lambda_n^+$.  Let $T \in \mathcal{T}_\lambda$ and $(l,k) \in \lambda$.  Then $S(T;l,k) = max(T^{(l,k)}) = max\{ (T(l,k), m(U^{(l,k)}) \}$ . \end{lem}

Recall that the fourth tableau in Figure 1 is $Y_\lambda(\pi)$ for $\lambda = (5,5,3,3,2,1)$ and $\pi = (6,9,4,5,3,2,1,7,8)$.  For the first tableau $T$ we have $S(T) \leq Y_\lambda(\pi)$.  Since $S(T) = R(T)$, we have $T \in \mathcal{D}_\lambda(\pi)$.

\section{Right key dominated by a given key (from the east)}

Fix $(\lambda, \pi) \in \Lambda_n^+ \times S_n$ and form $Y_\lambda(\pi) =: Y$.  Let $T \in \mathcal{T}_\lambda$.  Here we give necessary and sufficient conditions on the values in $T$ so that its scanning tableau $S(T)$ is dominated by $Y_\lambda(\pi)$.

Fix some $(l,k) \in \lambda$.  We define a set $A_\lambda(T,\pi;l,k)$ that contains the ``allowable'' values for $T$ at the location $(l,k)$.  Form $U^{(l,k)} =: U$ as in Section 4.  If $m(U) > Y(l,k)$, define $A_\lambda(T,\pi;l,k) := \emptyset$.  If $m(U) \leq Y(l,k)$, define $A_\lambda(T,\pi;l,k) := [ k , min \{ Y(l,k), T(l,k+1) -1, T(l+1, k) \} ] $.

Let $T$ be the first tableau in Figure 1 and let $(l,k) = (2,4)$.  Here $U^{(2,4)}$ is the fifth tableau in Figure 1:  It is obtained by removing the first column of $T$ and the values $(8,8,8,9)$ in $P(T;2,5)$ from $T$ to produce $T^{(2,4)}$, and then removing the leftmost column.  Note that $m(U) = 5 \leq 6 = Y(2,4)$.  Thus $A_\lambda(T,\pi;2,4) = [4, min\{ Y(2,4), T(2,5) - 1, T(3,4) ] = [ 4  , min\{6,8-1,8\} ] = [4,6]$.

\begin{thm}Given $(\lambda, \pi) \in \Lambda_n^+ \times S_n$, let $T \in \mathcal{T}_\lambda$.  Then $S(T) \leq Y_\lambda(\pi)$ if and only if $T(l,k) \in A_\lambda(T,\pi;l,k)$ for all $(l,k) \in \lambda$.\end{thm}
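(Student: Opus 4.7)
The plan is to localize the claim. Both sides of the asserted equivalence are conjunctions over the boxes $(l,k) \in \lambda$, so it suffices to prove a boxwise biconditional: for each $(l,k)$, the inequality $S(T;l,k) \leq Y(l,k)$ is equivalent to $T(l,k) \in A_\lambda(T,\pi;l,k)$. The global statement then follows by taking conjunctions. This localization is natural because $A_\lambda(T,\pi;l,k)$ is defined purely from data available at $(l,k)$ and in the remnant $T^{(l;k)}$, and because $S(T)$ is built entry by entry.

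Next I would unpack the right-hand condition. The statement $T(l,k) \in A_\lambda(T,\pi;l,k)$ decomposes into three ingredients: (i) the set is nonempty, i.e.\ $m(U) \leq Y(l,k)$; (ii) $T(l,k) \leq Y(l,k)$; and (iii) the semistandardness inequalities $k \leq T(l,k)$, $T(l,k) \leq T(l,k+1)-1$, and $T(l,k) \leq T(l+1,k)$. Since $T \in \mathcal{T}_\lambda$ is assumed throughout, ingredient (iii) is automatic, so the real content of membership in $A_\lambda(T,\pi;l,k)$ is the conjunction of (i) and (ii).

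The key tool for the left-hand condition is Lemma 4.1, which says $S(T;l,k) = m(T')$, where $T'$ is $T^{(l;k)}$ with columns $1,\ldots,l-1$ removed. The tableau $T'$ differs from the tableau $U$ appearing in the definition of $A_\lambda(T,\pi;l,k)$ only by retaining the $l^{th}$ column of $T^{(l;k)}$. Since no scanning path starting in column $l$ at row $k$ or above has yet been peeled away, the bottom of that column in $T^{(l;k)}$ is still the box $(l,k)$ with value $T(l,k)$. Consequently
\[
S(T;l,k) \;=\; m(T') \;=\; \max\bigl\{\, T(l,k),\; m(U)\,\bigr\}.
\]
Therefore $S(T;l,k) \leq Y(l,k)$ holds if and only if both $T(l,k) \leq Y(l,k)$ and $m(U) \leq Y(l,k)$, which is exactly the conjunction of ingredients (ii) and (i). This proves the boxwise biconditional, and both implications of the theorem fall out at once. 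As a sanity check, the forward direction also recovers $T(l,k) \leq Y(l,k)$ directly from Corollary 4.1 applied to $T \leq S(T) \leq Y$.

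I do not foresee a genuine obstacle; the argument is essentially a definition-chase. The one point that requires care is the off-by-one bookkeeping between Lemma 4.1's $T'$ (which drops the first $l-1$ columns) and the paper's $U$ (which drops the first $l$), and the verification that the bottom of column $l$ in $T^{(l;k)}$ is indeed $T(l,k)$. Once that is pinned down, the identity $S(T;l,k) = \max\{T(l,k), m(U)\}$ does all the work and the theorem is immediate.
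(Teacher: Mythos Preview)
Your proposal is correct and follows essentially the same route as the paper's proof: both reduce the question to the identity $S(T;l,k) = \max\{T(l,k), m(U)\}$ obtained from Lemma~4.1, then observe that the semistandardness constraints in $A_\lambda(T,\pi;l,k)$ are automatic for $T \in \mathcal{T}_\lambda$, so membership is equivalent to $m(U) \leq Y(l,k)$ together with $T(l,k) \leq Y(l,k)$. Your explicit bookkeeping on the difference between $T'$ and $U$ (one extra column whose bottom is $T(l,k)$) is a helpful clarification that the paper leaves implicit; note that the corollary you cite as ``4.1'' is numbered 3.4 in the paper.
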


This result can be used in a procedure similar to Procedure 7.1 to construct Demazure tableaux for $(\lambda, \pi)$:  Suppose that the columns to the east and the boxes to the south of the location at hand in its column have been filled in with ``good-so-far'' values.  Find and remove the scanning paths originating from those boxes to the south.  If any of the column bottoms to the east in the remnant tableaux exceed the value of the $\lambda$-key for $\pi$ in the location at hand, then give up.  Otherwise one is free to choose any of the usual values from Proposition 1.1(a) for the location at hand, provided that one does not exceed the given key value there.

\begin{proof}Write $S(T) =: S$.  Let $(l,k) \in \lambda$.  Since $T$ is semistandard we have $k \leq T(l,k) \leq min \{ T(l,k+1) - 1, T(l+1,k) \}$.  By Corollary 3.4 we have $T(l,k) \leq S(l,k).$  Lemma \ref{lem:max} says $S(l,k) = max \{ T(l,k), m(U) \}$.

First suppose that $S \leq Y$ for $T$.  So $T(l,k) \leq Y(l,k)$.  And since $max \{ T(l,k),$ $ m(U) \}\leq Y(l,k)$, the set $A_\lambda(T,\pi;l,k)$ is non-empty.  Thus $T(l,k) \in A_\lambda(T,\pi;l,k)$.

Next suppose that $T(j,i) \in A_\lambda(T,\pi;j,i)$ for all $(j,i) \in \lambda$.  Since $A_\lambda(T,\pi;l,k)$ is non-empty, we have $m(U) \leq Y(l,k)$. Also we have $T(l,k) \leq Y(l,k)$.  Hence $S(l,k) \leq Y(l,k)$.\end{proof}

\section{Right key equal to a given key}

For this section and Section 7, fix $(\lambda, \pi) \in \Lambda_n^+ \rtimes S_n^\lambda$, and set $Y := Y_\lambda(\pi)$.  Let $T \in \mathcal{T}_\lambda$.  Here we give necessary and sufficient conditions on the values in $T$ so that its scanning tableau $S(T)$ is equal to $Y_\lambda(\pi)$.

Fix some $(l,k) \in \lambda$.  We now define a set $C_\lambda(T,\pi;l,k)$ that contains the allowable values for $T$ at the location $(l,k)$:  If $l = \lambda_1$, then set $C_\lambda(T,\pi;l,k) := \{ Y(l,k) \} $ for $1 \leq k \leq \zeta_{\lambda_1}$.  Suppose $\lambda_1 > l \geq 1$.  Form $U$ from $T^{(l;k)}$ as in Section 5.  If $m(U) > Y(l,k)$, set $C_\lambda(T,\pi;l,k) := \emptyset$.  If $m(U) = Y(l,k)$, set $C_\lambda(T,\pi;l,k) := [ k, min \{ Y(l,k), T(l,k+1)-1, T(l+1,k) \} ]$.  If $m(U) < Y(l,k)$, set $C_\lambda(T,\pi;l,k) := \{ Y(l,k) \} \bigcap [ k, min \{ T(l,k+1) - 1, T(l+1,k) \} ]$.  An example of a set $C_\lambda(T,\pi;l,k)$ appears after the statement of Procedure 7.1.

\begin{thm}Given $(\lambda, \pi) \in \Lambda_n^+ \rtimes S_n^\lambda$, let $T \in \mathcal{T}_\lambda$.  Then $S(T) = Y_\lambda(\pi)$ if and only if $T(l,k) \in C_\lambda(T,\pi;l,k)$ for all $(l,k) \in \lambda$.\end{thm}

\begin{proof}The beginning of this proof is the same as the first paragraph of the proof of Theorem 5.1.

First suppose that $S=Y$ for $T$.  So $T(l,k) \leq Y(l,k)$.  Since $T$ is semistandard we have $k \leq T(l,k) \leq min \{ T(l,k+1) - 1, T(l+1,k) \}$.  Here we have $max \{ T(l,k), m(U) \} = Y(l,k)$, and hence $m(U) \leq Y(l,k)$.  If $m(U) < Y(l,k)$, then we must have $T(l,k) = Y(l,k)$ in order to have $S(l,k) = Y(l,k)$.  So here $T(l,k) \in C_\lambda(T,\pi;l,k)$.  If $m(U) = Y(l,k)$, one also has $T(l,k) \in C_\lambda(T,\pi;l,k)$.

Next suppose that $T(j,i) \in C_\lambda(T,\pi;j,i)$ for all $(j,i) \in \lambda$.  Since $C_\lambda(T,\pi;l,k)$ is non-empty, we have $m(U) = Y(l,k)$ or $m(U) < Y(l,k)$.  In the former case, having $T(l,k) \leq Y(l,k)$ implies that $S(l,k) = Y(l,k)$.  In the latter case, the definition of $C_\lambda(T,\pi;j,i)$ implies $T(l,k) = Y(l,k)$.  So $Y(l,k) \leq S(l,k) = max \{ T(l,k), m(U) \}$.  Now $Y(l,k) < max \{ T(l,k), m(U) \}$ would imply $Y(l,k) < m(U)$, which is impossible here.  Hence $Y(l,k) = S(l,k)$.\end{proof}

\section{Generation of tableaux for an atom}

We continue to work in the context established in Section 6.  Here we present a recursive implementation of Theorem 6.1:  It generates all tableaux $T$ of shape $\lambda$ that have their scanning tableau $S(T)$ equal to the $\lambda$-key $Y_\lambda(\pi)$.  This procedure constructs each of the desired tableaux from east to west.  (The generation procedure on p. 281 of \cite{Le1} produces all of $\mathcal{D}_\lambda(\pi)$.)

For $\lambda_1 \geq l \geq 1$, denote the partition with column lengths $\zeta_l, \zeta_{l+1}, ... , \zeta_{\lambda_1}$ by $\lambda^{[l]}$.  In the description below, lower portions of the pending new column are denoted by $L$ and the empty pending column is denoted $()$.  Each pending new column $L$ (that will be extended upward) needs to be accompanied by an updated (shrinking upwards) partial tableau $U$.  The sets of potential new values are denoted by $C$.  The columns of the growing tableau $T$ and of the shrinking tableau $U$ are indexed from the right by $\lambda_1, \lambda_1 - 1, \lambda_1 - 2, ...$.

\begin{proc}Input $\lambda \in \Lambda_n^+$ and $\pi \in S_n^\lambda$.  Let $\mathcal{V}^{[\lambda_1]}$ be the set consisting of the one tableau $T$ of shape $\lambda^{[\lambda_1]}$ that is formed by taking the last column of $Y_\lambda(\pi) =: Y$.  As $l$ decrements from $\lambda_1 - 1$ to $1$, successively form sets $\mathcal{V}^{[l]}$ of tableaux of shapes $\lambda^{[l]}$ as follows:

\noindent For each $T \in \mathcal{V}^{[l+1]}$, do:

\noindent Let $\mathcal{F}_{\zeta_l+1}$ be the set consisting of the one ordered pair $( (), T )$.

\noindent As $k$ decrements from $\zeta_l$ to $1$, successively build up sets $\mathcal{F}_k$ of ordered pairs as follows:

\noindent For each $(L,U) \in \mathcal{F}_{k+1}$, do:

\noindent When $k < \zeta_l$, let $t$ be the first (northernmost) value in $L$; when $k = \zeta_l$, let $t$ be $n+1$.

\noindent If $m(U) > Y(l,k)$, set $C := \emptyset$.

\noindent If $m(U) = Y(l,k)$, set $C := [ k , min \{ Y(l,k), t-1, T(l+1,k) \} ]$.

\noindent If $m(U) < Y(l,k)$, set $C := \{ Y(l,k) \} \bigcap [ k , min \{ t-1, T(l+1,k) \} ]$.

\noindent If $C$ is empty, then discard $(L,U)$.

\noindent Let $\mathcal{F}(L,U)$ be the set of all ordered pairs $(L^\prime, U^\prime)$ that can be formed by prepending an element $z$ of $C$ to $L$ and then forming $U^\prime$ by deleting from $U$ the values and the boxes that lie in the scanning path in $U$ that originates from the value $z$ at the location $(l,k)$.  Let $\mathcal{F}_k$ be the union of the $\mathcal{F}(L,U)$ as $(L,U)$ runs through $\mathcal{F}_{k+1}$.  If $\mathcal{F}_k$ is empty, then discard $T$.  (When $k = 1$, each $U^\prime$ will be the null tableau (()) on the empty shape.)

\noindent After $k=1$, form the elements of $\mathcal{V}^{[l]}$ descended from this $T$ by prepending each column $L$ that appears in a pair $(L, (()) )$ in $\mathcal{F}_1$ to the tableau $T$.  Continue to the next $T \in \mathcal{V}^{[l+1]}$.

\noindent After $l = 1$, output the set of semistandard tableaux $\mathcal{V}^{[1]}$.\end{proc}

Suppose $\lambda = (4, 4, 3, 3, 2, 1, 1)$ and $\pi = (6, 8, 3, 7, 4, 1, 9, 2, 5)$.  Then $Y_\lambda(\pi)$ is the second tableau in Figure 2.  Let $T$ be the first tableau in Figure 2 and let $(l,k) = (2,3)$.  Here $U$ is the third tableau in Figure 2.  Hence $m(U) = 6 = Y(2,3)$, and so $C_\lambda(T,\pi;2,3) = [ 3, min \{ 6, 6-1, 7 \} ] = [3, 5] = \{ 3, 4, 5 \}$.

\begin{figure}[h!]
$$
\begin{ytableau}
1 & 1 & 3 & 6 \\
2 & 3 & 4 & 8 \\
4 & 5 & 7 \\
5 & 6 & 8\\
6 & 7\\
7 \\
9 \\
\end{ytableau}\hspace{8mm}
\begin{ytableau}
1 & 3 & 3 & 6 \\
3 & 4 & 6 & 8 \\
4 & 6 & 7 \\
6 & 7 & 8\\
7 & 8\\
8 \\
9 \\
\end{ytableau}\hspace{8mm}
\begin{ytableau}
3 & 6 \\
4 \end{ytableau}\hspace{8mm}
\begin{ytableau}
\color{white}1 & \color{white}1 & 3 & 6 \\
\color{white}2 & \color{white}3 & 4 & 8 \\
\color{white}4 & \color{white}5 & 7 \\
\color{white}5 & 6 & 8\\
\color{white}6 & 7\\
\color{white}7 \\
\color{white}9 \\
\end{ytableau}
$$\caption{Tableaux for Section 6 and 7 examples.}
\end{figure}

Within Procedure 7.1, again let $(l,k) = (2,3)$ and now suppose that the values in the fourth (partial) tableau in Figure 2 have been chosen so far.  Note that these values come from $T$, and so $C$ is the set $C_\lambda(T,\pi;2,3)$ above.  In Figure 3 the respective cases for the potential values 3, 4, and 5 from $C$ are indexed with the subscripts $a, b, c$ within $\mathcal{F}(L,U)$.  For $(L^\prime_a, U^\prime_a)$, we have $m(U^\prime_a) = 3 < 4 = Y(2,2)$.  Thus $C = \{ 4 \} \bigcap $ $[ 2, min \{2, 4 \} ] = \emptyset$, and so $(L^\prime_a, U^\prime_a)$ should be discarded. The same applies to $(L^\prime_b, U^\prime_b)$.  However, for $(L^\prime_c, U^\prime_c)$, we have $C = [ 2 , min \{ 4, 4, 4 \} ] = [2,4]$.  Hence this process can be continued.  In fact, this partial tableau can be filled entirely to produce a tableau that satisfies the requirements of Theorem 6.1.  The tableau $T$ is one such tableau.

\begin{figure}[h!]
$$\mathcal{F}(L,U) =  \{ (L^\prime_a, U^\prime_a) = ( \hspace{.8mm} \begin{ytableau} 3 \\ 6 \\ 7 \end{ytableau}, \begin{ytableau} 3 \end{ytableau} \hspace{.8mm} ) ,
(L^\prime_b, U^\prime_b) = ( \hspace{.8mm} \begin{ytableau} 4 \\ 6 \\ 7 \end{ytableau},  \begin{ytableau} 3 \end{ytableau} \hspace{.8mm} ) ,
(L^\prime_c, U^\prime_c) = ( \hspace{.8mm} \begin{ytableau} 5 \\ 6 \\ 7 \end{ytableau}, \begin{ytableau} 3 \\ 4 \end{ytableau} \hspace{.8mm} )  \} .$$
\caption{Procedure 7.1}
\end{figure}

Once the proof of Theorem 6.1 is understood, it should be clear that Procedure 7.1 does indeed generate all of the exact Demazure tableaux at $\pi$:

\begin{thm}Let $(\lambda, \pi) \in \Lambda_n^+ \rtimes S_n^\lambda$.  Then $\mathcal{V}^{[1]} = \{ T \in \mathcal{T}_\lambda \hspace{2mm} | \hspace{2mm} S(T) = Y_\lambda(\pi) \}$.\end{thm}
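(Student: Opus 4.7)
The plan is to reduce Theorem 7.2 to Theorem 6.1 by showing that Procedure 7.1 is merely a backtracking implementation of the pointwise characterization $T(l,k) \in C_\lambda(T,\pi;l,k)$ for all $(l,k)\in\lambda$. The boxes are processed in reverse column order and, within each column, from the bottom upward; this means that whenever the procedure is about to select a value at a box $(j,i)$, every value appearing to the east of column $j$ and every value below $(j,i)$ in column $j$ has already been chosen. These are exactly the values required by Theorem 6.1 to compute $C_\lambda(T,\pi;j,i)$.

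I would organize the proof around the following invariant, proved by induction on the processing order of $(j,i)$: at the moment the procedure stands in $\mathcal{F}_{i+1}$ with pair $(L,U)$ descended from $T\in\mathcal{V}^{[j+1]}$, the auxiliary tableau $U$ coincides with the tableau obtained from the remnant $T^{(j;i)}$ (of any semistandard completion of the choices made so far) by deleting its first $j$ columns. This is well-posed because $T^{(j;i)}$ restricted to columns $>j$ depends only on the already-filled values in columns $j+1,\dots,\lambda_1$ and on the chosen entries $(j,i+1),\dots,(j,\zeta_j)$, which together determine the paths $P(T;j,k)$ for $k>i$. The inductive step holds because ``removing the scanning path in $U$ originating from $z$ at $(j,i)$'' is exactly the Section 4 operation that turns $T^{(j;i)}$ into $T^{(j;i-1)}$ after restriction to columns $>j$; the earliest-weakly-increasing-subsequence computation is insensitive to entries above $(j,i)$ in column $j$.

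With the invariant established, I would note that the three-case definition of $C$ inside the procedure coincides with $C_\lambda(T,\pi;j,i)$ of Section 6 for any completion $T$ consistent with the partial choices: the comparisons $m(U)\gtreqless Y(j,i)$ match, and the interval $[i,\min\{\cdots\}]$ matches after identifying $t=T(j,i+1)$ (taking $t=n+1$ in the bottom row) and using $T(j+1,i)$ from the previously processed column. The base case $\mathcal{V}^{[\lambda_1]}=\{\text{last column of }Y\}$ aligns with the $l=\lambda_1$ clause $C_\lambda(T,\pi;\lambda_1,k)=\{Y(\lambda_1,k)\}$, which is forced anyway because the scanning path from the rightmost column is a single box. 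The forward inclusion $\mathcal{V}^{[1]}\subseteq\{T:S(T)=Y_\lambda(\pi)\}$ then follows by applying Theorem 6.1, since every value placed by the procedure lies in $C_\lambda(T,\pi;l,k)$. For the reverse inclusion, given $T\in\mathcal{T}_\lambda$ with $S(T)=Y_\lambda(\pi)$, Theorem 6.1 guarantees $T(l,k)\in C_\lambda(T,\pi;l,k)$ at every box, and by the invariant the corresponding $(L,U)$-pair is retained at every stage, so $T\in\mathcal{V}^{[1]}$.

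The main obstacle is writing the invariant cleanly and verifying it without circularity, since $U$ in the procedure is defined through a sequence of local deletions while $C_\lambda(T,\pi;l,k)$ is defined through the global remnant $T^{(l;k)}$. The identification hinges on the observation (implicit in Section 4) that the scanning path $P(T;j,k)$ uses only the column bottoms of $T^{(j;k)}$ in columns $\geq j$ and is unaffected by the as-yet-unchosen entries above $(j,k)$ in column $j$; once this is made explicit, the inductive update of $U$ inside the procedure is literally the inductive construction of $T^{(j;i-1)}$ restricted to columns $>j$, and the proof collapses to a straightforward appeal to Theorem 6.1.
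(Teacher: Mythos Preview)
Your proposal is correct and follows exactly the approach the paper intends: the paper's own ``proof'' is the single sentence preceding the theorem, namely that once Theorem~6.1 is understood it should be clear that Procedure~7.1 generates precisely the exact Demazure tableaux at~$\pi$. You have simply written out carefully the invariant (that the procedure's $U$ agrees with the Section~5 tableau obtained from $T^{(j;i)}$ by deleting its first $j$ columns) and the two inclusions that the paper leaves to the reader.
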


\section{Right key dominated by a given key (from the southwest)}

As in Section 5, fix $(\lambda, \pi) \in \Lambda_n^+ \times S_n$.  Here we show the scanning tableau of a given $T$ is dominated by the $\lambda$-key of $\pi$ if and only if the values of $T$ come from a ``southwest'' condition set.

Fix $T \in \mathcal{T}_\lambda$.  Fix $(l, k) \in \lambda$.  For each $j \leq l$, it can be seen that there is exactly one $i \in [1, \zeta_j]$ such that $(l,k) \in P(T; j,i)$.  Now fix some $1 \leq j \leq l-1$.  Let $a(l,k; j) =: a(j)$ be the row index such that $(l-1, k) \in P(T; j, a(j))$.  If $k < \zeta_l$, let $b(l,k; j) =: b(j)$ be the row index such that $(l, k+1) \in P(T; j, b(j))$.  When $k = \zeta_l$, set $b(l,k; j) := \zeta_j + 1$.  It can be seen that the only paths beginning in column $j$ that may reach $(l,k)$ are the paths originating from rows $a(j)$ through row $b(j) - 1$ inclusive.

For $a(j) \leq i \leq b(j)-1$, let $h$ be the largest value less than $l$ such that $(h, m) \in P(T; j, i)$ for some $m$.  Then for such $i$, define $E(l, k; j, i) := T(h,m)$, where $h$ and $m$ depend upon $l,k,j,i$ as above.  By convention, set $a(l) := b(l)-1 := k$ and $E(l,k;l,k) :=k$.  Now refer to $(\lambda, \pi) \in \Lambda_n^+ \times S_n$ and $Y_\lambda(\pi) =: Y$.  Define the set $B_\lambda(T, \pi; l, k) := \bigcap_{j=1}^{l} \hspace{2mm}( \bigcup_{i=a(j)}^{b(j)-1} \hspace{2mm} [ E(l, k; j,i), Y_\lambda(\pi; j,i)  ] \hspace{2mm} )$.  The following result appeared in \cite{Wi1}:

\begin{thm}Given $(\lambda, \pi) \in \Lambda_n^+ \times S_n$, let $T \in \mathcal{T}_\lambda$.  Then $S(T) \leq Y_\lambda(\pi)$ if and only if $T(l, k) \in B_\lambda(T, \pi; l, k)$ for all $(l,k) \in \lambda$.\end{thm}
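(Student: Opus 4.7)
The plan is to prove both implications of the theorem by directly unwinding the definition of $B_\lambda(T,\pi;l,k)$ in terms of scanning paths, using Lemma 4.1 together with the uniqueness statement preceding the theorem: for each $j \leq l$ there is a unique row $i(j) \in [a(j),b(j)-1]$ with $(l,k) \in P(T;j,i(j))$.

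For the forward direction, I would fix $(l,k) \in \lambda$ and, for each $j \leq l$, pick the unique $i(j)$ with $(l,k) \in P(T;j,i(j))$. Because this path is a weakly increasing sequence of $T$-values that visits $(l,k)$, I get $E(l,k;j,i(j)) \leq T(l,k) \leq T(l^*,k^*)$, where $(l^*,k^*)$ is the terminal box of the path (the lower bound identifies $E(l,k;j,i(j))$ as the $T$-value at the box of the path immediately preceding $(l,k)$). By Lemma 4.1 the terminal value equals $S(T;j,i(j))$, which by hypothesis is at most $Y(j,i(j))$. Hence $T(l,k) \in [E(l,k;j,i(j)),Y(j,i(j))]$, one of the summands of the column-$j$ union. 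Intersecting over $j$ yields $T(l,k) \in B_\lambda(T,\pi;l,k)$.

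For the reverse direction, I would proceed endpoint by endpoint. Fix $(j,i)$ and let $(l^*,k^*)$ be the terminus of $P(T;j,i)$, so $S(T;j,i) = T(l^*,k^*)$. Applying the hypothesis at $(l^*,k^*)$ and restricting to the column-$j$ factor gives $T(l^*,k^*) \in \bigcup_{i' = a(j)}^{b(j)-1} [E(l^*,k^*;j,i'),Y(j,i')]$, where $a(j),b(j)$ are evaluated at $(l^*,k^*)$. By uniqueness $i \in [a(j),b(j)-1]$, and the bound $S(T;j,i) \leq Y(j,i)$ will follow once I verify that the unique interval in this union containing $T(l^*,k^*)$ is the one indexed by $i' = i$.

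The main obstacle is exactly this last step. To handle it, I plan to show that as $i'$ ranges over $[a(j),b(j)-1]$ the intervals $[E(l^*,k^*;j,i'),Y(j,i')]$ stack monotonely, so that membership in the union singles out $i'$. The upper endpoints $Y(j,i')$ strictly increase with $i'$ because the $j$-th column of the key $Y_\lambda(\pi)$ is strictly increasing top to bottom. The lower endpoints $E(l^*,k^*;j,i')$ should inherit a compatible monotonicity from the pairwise disjointness and the successive-remnant construction of the paths $P(T;j,a(j)),\ldots,P(T;j,b(j)-1)$, which are computed in decreasing order of $i'$ and must therefore thread through the gaps left by their predecessors. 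Establishing this monotone stacking rigorously, and then deducing that the interval containing $T(l^*,k^*)$ is the one for $i' = i$ and hence $T(l^*,k^*) \leq Y(j,i)$, is the technical heart of the reverse implication.
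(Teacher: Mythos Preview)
Your forward direction is essentially the paper's argument. The reverse direction, however, is over-engineered, and the ``uniqueness'' framing you adopt is both unnecessary and likely false.

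You aim to show that the \emph{unique} interval in $\bigcup_{i'} [E(l^*,k^*;j,i'),Y(j,i')]$ containing $T(l^*,k^*)$ is the one with $i'=i$, via a monotone-stacking argument. But several intervals with $i'\leq i$ may well contain $T(l^*,k^*)$: for $i'<i$ the path $P(T;j,i')$ is computed after $P(T;j,i)$ has been removed, and every value along it is at most $S(T;j,i') < S(T;j,i) = T(l^*,k^*)$ (the column of $S(T)$ is strictly increasing), so the lower endpoint $E(l^*,k^*;j,i')$ is automatically below $T(l^*,k^*)$; whether the upper endpoint $Y(j,i')$ is also $\geq T(l^*,k^*)$ is exactly what is unknown. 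So uniqueness need not hold, and in any case you cannot establish membership in the $i'=i$ interval directly, since its upper bound $Y(j,i)$ is precisely what you are trying to prove.

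The paper's argument avoids all of this. One only needs to rule out $i'>i$: if $T(l^*,k^*) \geq E(l^*,k^*;j,h)$ for some $h>i$, then when the earlier path $P(T;j,h)$ was being built (before $P(T;j,i)$, since paths are formed from the bottom up), the box $(l^*,k^*)$ was still available and would have been absorbed into that EWIS---contradicting $(l^*,k^*)\in P(T;j,i)$. Hence $T(l^*,k^*) \in \bigcup_{h=a(j)}^{i}[E(l^*,k^*;j,h),Y(j,h)]$. Since the key $Y$ has strictly increasing columns, $Y(j,h)\leq Y(j,i)$ for every $h\leq i$, so $Y(j,i)$ is an upper bound for this entire sub-union, giving $S(T;j,i)=T(l^*,k^*)\leq Y(j,i)$ immediately. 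No stacking analysis of the $i'<i$ intervals is needed.
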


\begin{proof}Suppose $S(T) \leq Y_\lambda(\pi)$.  Fix $(l, k) \in \lambda$. Let $1 \leq j \leq l-1$.  Let $1 \leq i \leq \zeta_j$ be the unique index such that $(l,k) \in P(T; j,i)$.  Here $S(T;j,i) \leq Y(j,i)$.  The last value before $T(l,k)$ in the EWIS defining $P(T; j,i)$ is $E(l,k;j,i)$.  The last value in this EWIS is $S(T; j, i)$.  So $E(l,k; j,i) \leq T(l,k) \leq S(T;j,i)$.  Hence $T(l,k) \in [ E(l,k; j,i), Y(j,i) ]$.  Note that $a(j) \leq i \leq b(j)-1$.  When $j=l$, we have $\bigcup_{h=a(j)}^{b(j)-1}  [ E(l, k; j,h), Y(j,h)  ]  ) = [k, Y(l,k)]$.  Since $T$ is semistandard, we know $T(l,k) \geq k$.  The definition of $S(T;l,k)$ implies $T(l,k) \leq S(T; l,k)$.  Hence $T(l,k) \in [k, Y(l,k)]$.  Intersecting over $1 \leq j \leq l$, we see $T(l,k) \in B_\lambda(T,\pi;l,k)$.

Now suppose $T(l,k) \in B_\lambda(T, \pi; l, k)$ for all $(l,k) \in \lambda$.  Fix $(j, i) \in \lambda$.  Let $(l,k)$ be the last position in $P(T; j,i)$; here $S(T; j,i) = T(l,k)$.  Since $1 \leq j \leq l$ we have $T(l,k) \in \bigcup_{h = a(j)}^{b(j)-1} $ $[ E(l,k; j,h), Y(j,h) ]$.  However, the value $T(l,k) < E(l,k; j,h)$ for all $h > i$.  (Otherwise $(l,k)$ would be in $P(T; j,h)$ for some $h > i$.)  So $T(l,k) \in \bigcup_{h = a(j)}^i [ E(l,k; j,h) , Y(j,h) ]$.  Since $Y$ is semistandard, we have $Y(j, r) > Y(j, s)$ when $r > s$.  Thus $Y(j, i)$ is an upperbound for $\bigcup_{h = a(j)}^i [ E(l,k; j,h), Y(j,h) ]$.  This implies $S(T; j,i) = T(l,k) \leq Y(j, i)$.
\end{proof}

\section{Left key conditions}

Here we outline results for the left key of a tableau that are analogous to our Section 5 and 6 right key results.  These conditions for a left key to equal or to dominate a given key are expressed in terms of ``southwestern'' values.

Again fix $\lambda \in \Lambda_n^+$, but now fix $\sigma \in S_n^\lambda$.  Form the $\lambda$-key $Y_\lambda(\sigma) =: Y$ of $\sigma$ and let $T \in \mathcal{T}_\lambda$.  We denote the \textit{left key} of $T$ (as in Appendix A.5 of \cite{Ful}) by $L(T)$.  Following Section 5 of \cite{Wi2}, we describe the construction of the \textit{left scanning tableau} $M(T) =: M$.  Let $1 \leq l \leq \lambda_1$.  Remove the columns to the east of the $l^{th}$ column from $T$ (and $\lambda$), and re-use the notation $T^{(l,\zeta_l)}$ to denote this result.  Consider the value $T(l, \zeta_l)$ at the bottom of the $l^{th}$ column of $T^{(l,\zeta_l)}$.  Successively inspecting the values in the columns indexed by $l-1, l-2, ... ,$ find the values beginning with $T^{(l,\zeta_l)}(l,\zeta_l)$ that form the \textit{maximizing weakly decreasing sequence (MWDS)}:  To do so, take the maximum value in the next column to the left that is less than or equal to the most recent entry in the sequence.  Since $T$ is semistandard, one value will be taken from each column to the west.  The locations of these values form the \textit{left scanning path} originating at $T(l, \zeta_l)$;  it is denoted $N(T;l,\zeta_l)$.  The value of $M(l,\zeta_l)$ is defined to be the last value in this path; it is in the first column of $T$.  Remove the locations in and beneath $N(T;l,\zeta_l)$ from $\lambda$ and the corresponding values from $T$.  Since that path was ``southwesterly'', this will produce a shape and a tableau denoted $T^{(l,\zeta_l-1)}$.  Repeat this process to successively find and remove $N(T;l,k)$ and the values below it from $T^{(l,k)}$ to produce $T^{(l,k-1)}$ for $k = \zeta_l-1, \zeta_l - 2, ... , 1$.  Here $M(l,k)$ is defined at each stage to be the last value in $N(T;l,k)$.  Once this has been done for every $1 \leq l \leq \lambda_1$, the left scanning tableau $M(T)$ has been constructed.  According to Section 5 of \cite{Wi2}, we have $L(T) = M(T)$.

Fix some $(l,k) \in \lambda$.  First suppose $l \geq 2$.  Define \textbf{$V^{(l,k)} =: V$} to be the tableau produced by removing the rightmost remaining column from $T^{(l,k)}$ (and $\lambda$).  Let $q$ be maximal such that $V(l-1, q) \leq T(l,k+1) - 1$.  Then find $N(V;l-1,q), N(V;l-1, q-1), ...$.  (Do not remove these paths as they are formed.)  Note that if $k \leq h < i \leq q$, then $N(V;l-1,h)$ stays weakly above $N(V;l-1,i)$.  Let $g_q, g_{q-1},...$ be the ending values in the first column of $V$ (and hence $T$) of these paths.  Note that the ordering of the paths implies $g_q \geq g_{q-1} \geq ...$.  Let $p$ be minimal such that $g_p \geq Y(l,k)$:  Then $N(V;l-1,p)$ is the last path that needs to be considered, where $p \geq k$.  If no such $p$ exists, define the set $F_\lambda(T,\sigma;l,k) := \emptyset$.  Otherwise define $F_\lambda(T,\sigma;l,k) := [ T(l-1,p), T(l,k+1) - 1]$.  When $l = 1$, define $F_\lambda(T, \sigma; l, k) := [Y(l,k), T(l,k+1) - 1]$.

\begin{thm}Given $(\lambda, \sigma) \in \Lambda_n^+ \rtimes S_n^\lambda$, let $T \in \mathcal{T}_\lambda$.  Then $M(T) \geq Y_\lambda(\sigma)$ if and only if $T(l,k) \in F_\lambda(T,\sigma;l,k)$ for all $(l,k) \in \lambda$.\end{thm}

\begin{proof}  Let $(l,k) \in \lambda$.  The case $l=1$ is obvious.  Suppose $l \geq 2$.  By semistandardness $T(l,k) \leq T(l,k+1) - 1$.  So when forming the MWDS for $M(l,k)$ we need consider only values within the locations $(l-1,q), (l-1, q-1), ... , (l-1,k)$ where $q$ is maximal such that $T(l-1,q) \leq T(l,k+1) - 1$ and such that $(l-1,q)$ was not in a left scanning path for a location $(l,h)$ with $h > k$. Refer to the definition of $F_\lambda(T,\sigma;l,k)$ for the entities $q, p, g_q, g_{q-1}, ... , g_p$.

First suppose that $M \geq Y$.  For the sake of contradiction, suppose $T(l,k) < T(l-1,p)$.  Let $p > h \geq k$ be such that the MWDS from $(l,k)$ passes through $(l-1,h)$.  By the minimality of $p$ we have $g_h < Y(l,k)$.  But since $g_h = M(l,k)$, this would yield the contradiction $M(l,k) < Y(l,k)$.  Hence $T(l,k) \geq T(l-1,p)$.  So $T(l,k) \in F_\lambda(T,\sigma;l,k)$.

Next suppose that $T(j,i) \in F_\lambda(T,\sigma;j,i)$ for all $(j,i) \in \lambda$.  Since $F_\lambda(T,\sigma;l,k)$ is non-empty, we have $T(l-1,p) \leq T(l,k) \leq T(l,k+1) - 1$.  Let $q \geq h \geq p$ be such that the MWDS from $(l,k)$ passes through $(l-1,h)$.  Then $M(l,k) = g_h \geq g_p \geq Y(l,k)$. \end{proof}

Now we constrain $T$ so that $M(T) = Y_\lambda(\sigma)$:  Let $q, p, g_q, ... , g_p$ be as above.  Let $a$ be minimal and $b$ maximal such that $g_a = Y(l,k) = g_b$.  If no such $a,b$ exist, define the set $G_\lambda(T,\sigma;l,k) := \emptyset$.  Otherwise define $G_\lambda(T,\sigma;l,k) := [ T(l-1, a), min \{ T(l-1,b+1) - 1, T(l, k+1) - 1 \} ]$.  (When $l = 1$, define $G_\lambda(T,\sigma;l,k) := \{ Y(l,k) \}$.)  The proof of the next result is similar to that of Theorem 9.1:

\begin{thm}Given $(\lambda, \sigma) \in \Lambda_n^+ \rtimes S_n^\lambda$, let $T \in \mathcal{T}_\lambda$.  Then $M(T) = Y_\lambda(\sigma)$ if and only if $T(l,k) \in G_\lambda(T,\sigma;l,k)$ for all $(l,k) \in \lambda$.\end{thm}

\section{Conclusions}

Using the sets that were developed using the scanning viewpoints in Sections 5-9, the following applications to the original right or left key viewpoint and to polynomials may be stated for a fixed pair of choices $(\lambda, \pi) \in \Lambda_n^+ \rtimes S_n^\lambda$:

\begin{thm}Let $T$ be a semistandard tableau of shape $\lambda$.  The following are equivalent:

\noindent (i)  $T$ is a Demazure tableau for $\pi$ (that is, $R(T) \leq Y_\lambda(\pi)$),

\noindent (ii)  $T(l,k) \in A_\lambda(T, \pi; l, k)$ for all $(l,k) \in \lambda$, and

\noindent (iii)  $T(l,k) \in B_\lambda(T,\pi;l,k)$ for all $(l,k) \in \lambda$. \end{thm}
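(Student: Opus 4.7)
The plan is to observe that this theorem is essentially a restatement of results already established in Sections 3, 5, and 8, packaged together for the reader who prefers the classical ``right key'' language to the ``scanning tableau'' language used throughout the bulk of the paper.

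The chain of equivalences is assembled from three previously proved results. By Theorem 3.3, one has $R(T) = S(T)$ for every $T \in \mathcal{T}_\lambda$. Hence condition (i), namely $R(T) \leq Y_\lambda(\pi)$, is literally the same as the condition $S(T) \leq Y_\lambda(\pi)$. But that latter condition was shown in Theorem 5.1 to be equivalent to $T(l,k) \in A_\lambda(T,\pi;l,k)$ for all $(l,k) \in \lambda$, which is (ii); and in Theorem 8.1 to be equivalent to $T(l,k) \in B_\lambda(T,\pi;l,k)$ for all $(l,k) \in \lambda$, which is (iii). Stringing these together yields (i) $\Leftrightarrow$ (ii) and (i) $\Leftrightarrow$ (iii), completing the proof.

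There is no genuine obstacle; the only work was already done in the earlier sections. The role of this statement is expository: it translates the ``scanning'' characterizations of Demazure tableaux, which are the technically useful form, into the language of right keys that appears in the original definition (via Theorem 3.1 of Lascoux--Sch\"utzenberger). So the proof in the paper should simply cite Theorem 3.3 to identify $R(T)$ with $S(T)$, and then invoke Theorems 5.1 and 8.1 for the two concrete descriptions. I would not attempt to redo any of the path-tracking or EWIS-remnant arguments here, as they have already been carried out in the proofs of Theorems 5.1 and 8.1.
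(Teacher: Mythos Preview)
Your proposal is correct and matches the paper's own treatment: Theorem 10.1 is presented in the ``Conclusions'' section without a separate proof, precisely because it is just the combination of Theorem 3.3 (identifying $R(T)$ with $S(T)$) with Theorems 5.1 and 8.1. There is nothing to add; your reading of the theorem's expository role is exactly right.
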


\begin{cor}The Demazure character $d_\lambda(\pi;x)$ is the sum of $x^T$ over all $T \in \mathcal{T}_\lambda$ such that

\noindent (i)  $T(l,k) \in A_\lambda(T,\pi;l,k)$ for all $(l,k) \in \lambda$, or

\noindent (ii)  $T(l,k) \in B_\lambda(T,\pi;l,k)$ for all $(l,k) \in \lambda$.\end{cor}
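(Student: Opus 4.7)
The statement is essentially a direct consequence of assembling the equivalences in Theorem 10.1 with the characterization of $d_\lambda(\pi;x)$ as a generating function for Demazure tableaux. My plan is to simply combine previously established results; no new combinatorial work should be required.

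First I would invoke Theorem 3.1 to write
\[
d_\lambda(\pi;x) \;=\; \sum_{\substack{T \in \mathcal{T}_\lambda \\ R(T) \leq Y_\lambda(\pi)}} x^T.
\]
Next, by Theorem 3.3, the right key $R(T)$ coincides with the scanning tableau $S(T)$, so the indexing condition $R(T) \leq Y_\lambda(\pi)$ is the same as $S(T) \leq Y_\lambda(\pi)$. At this point, Theorem 10.1 says precisely that each of the two membership conditions, $T(l,k) \in A_\lambda(T,\pi;l,k)$ for all $(l,k) \in \lambda$ and $T(l,k) \in B_\lambda(T,\pi;l,k)$ for all $(l,k) \in \lambda$, is equivalent to $S(T) \leq Y_\lambda(\pi)$ (and hence to being a Demazure tableau for $\pi$). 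Substituting either condition into the index set of the sum yields the two claimed formulas.

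For good measure, I would note that Theorem 10.1 itself is the merging of Theorems 5.1 and 8.1 with the identification $R(T) = S(T)$, so no further argument is needed. There is no real obstacle here: the only thing to verify is that no tableau is silently dropped or double-counted, but since the sum in Theorem 3.1 ranges over \emph{every} $T \in \mathcal{T}_\lambda$ satisfying a single condition $R(T) \leq Y_\lambda(\pi)$ and Theorem 10.1 provides an \emph{if and only if} restatement of this single condition, this is automatic. The proof can thus be presented in one or two sentences.
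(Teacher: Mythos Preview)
Your proposal is correct and matches the paper's approach: the paper states this as a corollary with no explicit proof, treating it as immediate from Theorem 10.1 together with Theorem 3.1, which is exactly the combination you describe.
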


\begin{thm}A semistandard tableau $T$ of shape $\lambda$ is an exact Demazure tableau at $\pi$ (that is, $R(T) = Y_\lambda(\pi)$) if and only if $T(l,k) \in C_\lambda(T,\pi;l,k)$ for all $(l,k) \in \lambda$.\end{thm}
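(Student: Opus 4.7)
The plan is to obtain Theorem 10.3 as an immediate synthesis of two earlier results in the paper, with no new combinatorial work required. The statement is essentially a translation of Theorem 6.1 from the ``insider'' scanning language into the standard right-key language used for Demazure-theoretic applications.

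First I would invoke Theorem 3.3 (quoted from \cite{Wi1}), which asserts that for every $T \in \mathcal{T}_\lambda$ one has $R(T) = S(T)$. Consequently, the condition $R(T) = Y_\lambda(\pi)$ in the definition of an exact Demazure tableau at $\pi$ is equivalent to the condition $S(T) = Y_\lambda(\pi)$ on the scanning tableau. Next I would apply Theorem 6.1, which states precisely that $S(T) = Y_\lambda(\pi)$ holds if and only if $T(l,k) \in C_\lambda(T,\pi;l,k)$ for every $(l,k) \in \lambda$. Chaining the two equivalences yields the theorem.

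Since both ingredients are already available, there is no real obstacle: the argument amounts to a one-line substitution of $R(T)$ for $S(T)$ in the conclusion of Theorem 6.1. The only conceptual point worth flagging is that the sets $C_\lambda(T,\pi;l,k)$ are defined in terms of $T$ itself (via the remnant tableau used to form $U$ and compute $m(U)$), so the biconditional must be read as a self-referential consistency condition on $T$ rather than as an independent membership test; but this was already the case in Theorem 6.1, and the equivalence $R(T) = S(T)$ does not disturb it. Hence the proof reduces to the single sentence: combine Theorem 3.3 with Theorem 6.1.
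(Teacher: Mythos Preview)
Your proposal is correct and matches the paper's approach exactly: Theorem 10.3 is presented in Section 10 as a restatement of Theorem 6.1 in right-key language, and the only bridge needed is Theorem 3.3 ($R(T) = S(T)$). The paper does not even write out a separate proof for Theorem 10.3, treating it as an immediate translation, which is precisely what you do.
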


\begin{cor}The atom $c_\lambda(\pi;x)$ is the sum of $x^T$ over all $T \in \mathcal{T}_\lambda$ such that $T(l,k) \in C_\lambda(T,\pi;l,k)$ for all $(l,k) \in \lambda$.\end{cor}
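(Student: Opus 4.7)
The plan is to combine the polynomial-level characterization of atoms provided by Theorem 3.2 with the tableau-level characterization just established in Theorem 10.3. Theorem 3.2 says that
$$c_\lambda(\pi;x) = \sum_{\substack{T \in \mathcal{T}_\lambda \\ R(T) = Y_\lambda(\pi)}} x^T,$$
and Theorem 10.3 says that for $T \in \mathcal{T}_\lambda$, the equality $R(T) = Y_\lambda(\pi)$ is equivalent to $T(l,k) \in C_\lambda(T,\pi;l,k)$ for all $(l,k) \in \lambda$. Substituting the latter characterization for the summation condition in the former identity immediately yields the desired expression for $c_\lambda(\pi;x)$.

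Thus the proof proceeds in two short steps. First, invoke Theorem 3.2 to express $c_\lambda(\pi;x)$ as the sum of $x^T$ over those $T \in \mathcal{T}_\lambda$ whose right key equals $Y_\lambda(\pi)$. Second, use Theorem 10.3 to replace the right-key condition by the cell-by-cell membership condition $T(l,k) \in C_\lambda(T,\pi;l,k)$. The two indexing sets coincide as subsets of $\mathcal{T}_\lambda$, so the two sums are term-by-term identical.

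There is essentially no obstacle. All of the combinatorial substance has already been deposited into Theorem 10.3, which in turn rests on Theorem 6.1 at the scanning-tableau level together with the identification $R(T) = S(T)$ from Theorem 3.3. The present corollary is purely a translation of that equivalence from the language of tableau conditions into the language of the atom polynomial, and it parallels the earlier Corollary 10.2 (for Demazure characters) in exactly the same way that Theorem 10.3 parallels Theorem 10.1.
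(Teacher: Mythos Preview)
Your proposal is correct and matches the paper's approach: the paper states this as a corollary with no explicit proof, treating it as immediate from Theorem 10.3 together with Theorem 3.2, exactly as you describe.
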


\begin{thm}Procedure 7.1 produces all semistandard tableaux whose right keys are the $\lambda$-key of $\pi$, that is $\mathcal{V}^{[1]} = \{ T \hspace{2mm} | \hspace{2mm} R(T) = Y_\lambda(\pi) \}$.\end{thm}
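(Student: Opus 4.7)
The plan is to reduce Theorem 10.5 to two earlier results that have already been established in the excerpt, so the work is essentially bookkeeping rather than new combinatorics. The claim concerns the right key $R(T)$, whereas Procedure 7.1 and all the machinery in Sections 5--7 are phrased in terms of the scanning tableau $S(T)$. The bridge between these two viewpoints is Theorem 3.3, which asserts $R(T) = S(T)$ for every $T \in \mathcal{T}_\lambda$. So the entire statement reduces to checking that $\mathcal{V}^{[1]}$ is exactly the set of tableaux with $S(T) = Y_\lambda(\pi)$.

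First I would invoke Theorem 7.2 directly: it states $\mathcal{V}^{[1]} = \{T \in \mathcal{T}_\lambda \mid S(T) = Y_\lambda(\pi)\}$. Next I would apply Theorem 3.3 to rewrite the scanning-tableau condition as a right-key condition: for every $T \in \mathcal{T}_\lambda$ we have $S(T) = Y_\lambda(\pi)$ if and only if $R(T) = Y_\lambda(\pi)$. Combining these two equalities of sets yields $\mathcal{V}^{[1]} = \{T \in \mathcal{T}_\lambda \mid R(T) = Y_\lambda(\pi)\}$, which is exactly the conclusion.

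The only potentially subtle point is ensuring that every tableau produced by Procedure 7.1 is genuinely semistandard of shape $\lambda$. This is built into the procedure itself because each candidate value $z \in C$ is drawn from the interval starting at $i$ and bounded above by $\min\{t-1, T(j+1,i)\}$ (the usual southeastern semistandard bounds), so every prepended column is column-strict and row-weakly-increasing against the already-fixed columns to the east. Hence $\mathcal{V}^{[1]} \subseteq \mathcal{T}_\lambda$, which is what makes the application of Theorem 3.3 legitimate.

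The main obstacle, if one wanted to write a more self-contained proof, would be justifying Theorem 7.2 itself --- that is, verifying that the east-to-west, bottom-to-top backtracking correctly enumerates the tableaux satisfying the $C_\lambda$ membership conditions of Theorem 6.1. But the remark immediately after Procedure 7.1 in the excerpt already asserts that this follows once the proof of Theorem 6.1 is understood: the sets $C$ appearing in the procedure are literally the sets $C_\lambda(T,\pi;l,k)$ read off from already-chosen values to the east and south, and the deletion of the scanning path of $z$ from $U$ maintains the invariant that $U$ equals the remnant tableau used in the definition of $C_\lambda$. So the proof of Theorem 10.5 itself can be written as essentially a one-line composition of Theorems 7.2 and 3.3, with no further obstacles.
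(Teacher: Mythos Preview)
Your proposal is correct and matches the paper's own approach: Section 10 is explicitly presented as a translation of the scanning-tableau results of Sections 5--9 into the right/left key language via Theorem 3.3, so Theorem 10.5 is indeed just Theorem 7.2 combined with $R(T)=S(T)$. The paper gives no separate proof for Theorem 10.5, so your one-line composition of Theorems 7.2 and 3.3 (with the observation that Procedure 7.1 builds only semistandard tableaux) is exactly what is intended.
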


\begin{cor}The atom $c_\lambda(\pi;x)$ is the sum of $x^T$ over all $T \in \mathcal{V}^{[1]}$.\end{cor}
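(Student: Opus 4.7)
The plan is to observe that Corollary 10.6 is an immediate composition of two results already in hand, so the ``proof'' is really a one-line chaining of identities; the work has already been done.

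First I would invoke Theorem 10.5, which asserts $\mathcal{V}^{[1]} = \{ T \in \mathcal{T}_\lambda \mid R(T) = Y_\lambda(\pi) \}$. This is the combinatorial identification of the output of Procedure 7.1 with the set of tableaux whose right key equals the given $\lambda$-key. Next I would invoke Theorem 3.2, the cited result from \cite{LS2}, which says that the atom $c_\lambda(\pi;x)$ is exactly the sum of weight monomials $x^T$ over those $T \in \mathcal{T}_\lambda$ with $R(T) = Y_\lambda(\pi)$. Chaining these two equalities gives
\[
c_\lambda(\pi;x) \;=\; \sum_{\substack{T \in \mathcal{T}_\lambda \\ R(T) = Y_\lambda(\pi)}} x^T \;=\; \sum_{T \in \mathcal{V}^{[1]}} x^T,
\]
which is the claim.

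There is essentially no obstacle here; the only thing to be careful about is the logical ordering. Corollary 10.4 (the atom statement for the $C_\lambda$ description) rests on Theorem 6.1 together with Theorem 3.2, while Corollary 10.6 rests instead on Theorem 10.5, which in turn is just a restatement of Theorem 7.2 (the correctness of Procedure 7.1). Theorem 7.2 was itself justified by the remark preceding it that ``Once the proof of Theorem 6.1 is understood, it should be clear that Procedure 7.1 does indeed generate all of the exact Demazure tableaux at $\pi$.'' So the real content was already produced in Sections 6 and 7, and Corollary 10.6 only packages this content at the polynomial level, analogously to how Corollary 10.2 and Corollary 10.4 package Theorems 10.1 and 10.3 respectively. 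Hence the proof I would write is a two-sentence citation of Theorem 10.5 and Theorem 3.2.
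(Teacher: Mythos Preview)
Your proposal is correct and matches the paper's approach: Corollary 10.6 is stated in Section 10 without an explicit proof, as it follows immediately by combining Theorem 10.5 (equivalently Theorem 7.2) with Theorem 3.2, exactly as you describe. There is nothing to add.
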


\noindent Now also fix some $\sigma \in S_n^\lambda$:

\begin{thm}A semistandard tableau $T$ has $Y_\lambda(\sigma) \leq L(T)$ if and only if $T(l,k) \in F_\lambda(T,\sigma;l,k)$ for all $(l,k) \in \lambda$, and it has $Y_\lambda(\sigma) = L(T)$ if and only if $T(l,k) \in G_\lambda(T,\sigma;l,k)$ for all $(l,k) \in \lambda$.\end{thm}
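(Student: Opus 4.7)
The plan is that Theorem 10.7 is simply the translation of Theorems 9.1 and 9.2 from the ``insider'' language of the left scanning tableau $M(T)$ into the ``outsider'' language of the left key $L(T)$. Consequently, the whole proof amounts to invoking one identification and then citing the two previous theorems verbatim.

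First I would recall the statement from the paragraph preceding Theorem 9.1 that, for every $T \in \mathcal{T}_\lambda$, the left key and the left scanning tableau coincide: $L(T) = M(T)$. (This is established as Section 5 of \cite{Wi1}, and is the left-key analogue of Theorem 3.3 of the present paper.) Hence the inequality $Y_\lambda(\sigma) \leq L(T)$ holds if and only if $Y_\lambda(\sigma) \leq M(T)$, and likewise $Y_\lambda(\sigma) = L(T)$ if and only if $Y_\lambda(\sigma) = M(T)$.

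Next I would apply Theorem 9.1 to rewrite the first of these equivalent conditions: $Y_\lambda(\sigma) \leq M(T)$ holds iff $T(l,k) \in F_\lambda(T,\sigma;l,k)$ for all $(l,k) \in \lambda$. This is the first half of Theorem 10.7. Similarly, Theorem 9.2 supplies the second half: $Y_\lambda(\sigma) = M(T)$ holds iff $T(l,k) \in G_\lambda(T,\sigma;l,k)$ for all $(l,k) \in \lambda$.

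No step is a genuine obstacle, since the real content sits inside Section 9 (and, one level deeper, inside Section 5 of \cite{Wi1} to justify $L(T) = M(T)$). Theorem 10.7 merely repackages those scanning-based characterizations in terms of the standard left key, making them usable for readers who prefer to work directly with $L(T)$ rather than with the scanning construction of Sections 4 and 9.
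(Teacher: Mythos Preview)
Your proposal is correct and matches the paper's own approach: Theorem 10.7 appears in the ``Conclusions'' section without a separate proof, precisely because it is the immediate translation of Theorems 9.1 and 9.2 via the identification $L(T)=M(T)$ recorded in Section 9 (from Section 5 of \cite{Wi1}). There is nothing to add.
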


Has the polynomial $\sum x^T$, sum over $T$ such that $Y_\lambda(\sigma) \leq L(T)$ and $R(T) \leq Y_\lambda(\pi)$, been considered?  Here $T(l,k) \in F_\lambda(T,\sigma;l,k) \bigcap B_\lambda(T,\pi;l,k)$, an intersection of two southwestern condition sets.  For this polynomial to be non-zero, one must have $\sigma \leq \pi$ in the Bruhat order on $S_n^\lambda$, since $L(T) \leq T \leq R(T)$ would imply $Y_\lambda(\sigma) \leq Y_\lambda(\pi)$.  Demazure introduced Demazure polynomials while studying the desingularization of Schubert varieties.  Kazhdan-Lusztig polynomials are also indexed by intervals in these Bruhat orders and are related to the structure of singularities of Schubert varieties.

All of our results are ``stable'' as  $n  \rightarrow \infty$ for $\pi \in S_\infty$ (as defined in \cite{RS1}).   So the polynomial results hold in infinitely many variables  $x_1, x_2, ... $ for unbounded tableaux.

\section*{Appendix:  Interface with representation theory}

The ingredients needed to define Demazure modules of semisimple Lie algebras and their characters are in \cite{Hum}:  Given a complex semisimple Lie algebra $L$, choose a Cartan subalgebra $H$ and a Borel subalgebra $B \supseteq H$.  These choices determine the rank  $n := dim(H)$ of $L$, and then (for $1 \leq i \leq n$) the simple roots $\alpha_i \in H^*$, the simple reflections  $s_i$ of $H^*$, and the fundamental weights $\omega_i$.  The simple reflections generate the Weyl group $W$ and the fundamental weights generate the weight lattice $\Lambda$, which contains the set of dominant weights $\Lambda^+$.  Fix $\lambda \in \Lambda^+$.  Let $V_\lambda$ be a finite dimensional irreducible $L$-module with highest weight $\lambda$.  Let  $w \in W$.  Let $v_{w\lambda} \neq 0$ be a weight vector of weight $w\lambda$.  The Demazure module $D_\lambda(w)$ is the $B$-submodule $\mathcal{U}(B).v_{w\lambda}$ of $V_\lambda$, where $\mathcal{U}(B)$ is the universal enveloping algebra of $B$.  The lowest weight of this module is $w\lambda$.  When $w$ is the longest element  $w_0$  of  $W$,  one has  $D_\lambda(w_0) = V_\lambda$.  For each  $\mu \in \Lambda$  there is a formal exponential  $e^\mu$.  Given  $\mu \in \Lambda$,  let  $m_\lambda(w,\mu)$  be the dimension of the  $H$-weight space of $D_\lambda(w)$  of weight  $\mu$.  The formal character  $char_\lambda(w)$  of  $D_\lambda(w)$  is  $\sum m_\lambda(w,\mu)e^\mu$,  where the sum runs over $\mu \in \Lambda$.  The formal character of the  $L$-module  $V_\lambda$  is $char_\lambda(w_0)$.  For some  $k \geq 0$,  let  $s_{i_k}...s_{i_2}s_{i_1}$  be a reduced decomposition for  $w$.  Taking $\partial_i(e^\mu)  :=  (e^\mu - e^{s_i\mu - \alpha_i})/(1 - e^{-\alpha_i})$  for  $\mu \in \Lambda$, the Demazure character formula (Equation 8.2.9.4 of \cite{Kum}) is $char_\lambda(w)  =  \partial_{i_k}...\partial_{i_2}\partial_{i_1}.e^\lambda$.  To precisely index the Demazure submodules of  $V_\lambda$,  first set  $J := J_\lambda := \{ i \in [n]:  s_i.\lambda = \lambda \}$.  Here  Stab$_W  (\lambda) = \langle s_i :  i \in J \rangle =: W_J $.  (If  $\lambda = \sum_{1 \leq i \leq n} a_i\omega_i$  for some  $a_i \in \mathbb{N}$,  then  $J = \{ i \in [n]:  a_i = 0 \}$ .)  There is one distinct Demazure module for each coset  $wW_J$  in the set of cosets  $W^J := W/W_J$.  Each such coset has a unique minimal length representative in $W$;  let $W^\lambda$ denote the set of these representatives.

Now take  $L$  to be the simple Lie algebra  $sl_n(\mathbb{C})$;  it has rank  $n-1$.  Here $W \cong S_n$, the symmetric group.  Choose  $H$  to be the subspace of diagonal matrices and  $B$  to be the subalgebra of trace free upper triangular matrices.  For  $1 \leq i \leq n$,  define  $\phi_i \in H^*$  to be the linear function that extracts the coefficient of the elementary matrix  $E_{ii}$  for each element of  $H$.  Note that  $\phi_1 + \phi_2 + ... + \phi_n = 0$  on  $H$.  Let $\textbf{E}$   denote the real span of  $\phi_1, \phi_2, ... , \phi_n$.  For $1 \leq i \leq n-1$, we have $\alpha_i = \phi_i - \phi_{i+1}$  and  $\omega_i = \phi_1 + \phi_2 + ... + \phi_i$  on  $H$.

In this paper we avoid using an action from the right (or mentioning  $w^{-1}$)  by using \textit{two} combinatorial models for the action of  $W$ from the left.  For the first model,  note that  $s_i.\phi_i = \phi_{i+1}$,  $s_i.\phi_{i+1} = \phi_i$,  and  $s_i.\phi_j = \phi_j$  when $j \notin \{ i,i+1 \} $.  Set  $x_i := e^{\phi_i}$.  Note that  $x_1x_2 \cdots x_n = 1$.  There is an induced action of  $W$  on the set of formal exponentials:  Here  $s_i.x_i = x_{i+1}$,  $s_i.x_{i+1} = x_i$,  and  $s_i.x_j = x_j$  when  $j \notin \{ i,i+1 \}$.  This is the same as the second action of the $s_i$ in Section 2, on polynomials.  Here we say that  $W$  is ``acting by value'' on the subscripts.  This induces the first action of the $s_i$ in Section 2, on permutations.  When using this model for $W$,  we often refer to the permutation $\pi := (\pi_1, ... , \pi_n) := \pi_w := w.(n)$.

Each $\mu \in \Lambda$  may be uniquely represented in the form  $\sum_{1 \leq i \leq n-1} b_i \omega_i$  for some $b_i \in \mathbb{Z}$.  Fix some $\lambda \in \Lambda^+$ and write  $\lambda =: \sum_{1 \leq i \leq n-1} a_i\omega_i$  for some  $a_i \in \mathbb{N}$.  Here the symbol  $\lambda$  is being used in the traditional Lie-theoretic manner. Transitioning to the traditional combinatorial usage of  $\lambda$,  set  $\lambda_i := \sum_{i \leq j \leq n-1} a_j$  for  $1 \leq i \leq n-1$  and  $\lambda_n := 0$.  This is the  $i^{th}$ coefficient of  $\lambda$ with respect to the  $\{ \phi_i \}$  spanning set for $\textbf{E}$   when  $\lambda_n$  is required to vanish.  Since  $\lambda_1 \geq \lambda_2 \geq ... \geq  \lambda_{n-1} \geq  \lambda_n = 0$,  this produces an  $n$-partition which will also be denoted  $\lambda$.  This partition  $\lambda$  is strict if and only if the weight  $\lambda$  is strongly dominant.  For the second combinatorial model of the action of  $W$,  note that for $1 \leq i \leq n-1$  one has the reflection action  $s_i.(\lambda_1,...,\lambda_i,\lambda_{i+1},...,\lambda_n)^T = (\lambda_1,...,\lambda_{i+1},\lambda_i,...,\lambda_n)^T$  on column vectors of coefficients with respect to  $\{ \phi_i \}$.  Here we say that  $W$  ``acts by position''.  When using this model for  $W$,  we often depict  $w \in W$  with a reduced decomposition  $s_{i_k}...s_{i_2}s_{i_1}$  for some  $k \geq 0$.  The orbit  $W\lambda$  consists of all of the ``shuffles'' of the multiset of  $n$ integers  $\{ \lambda_i \}_{1 \leq i \leq n}$;  these are called ``compositions [of the integer $|\lambda|$ ]'' in \cite{RS1}.  Note that  $J = \{ i \in [n-1]:  \lambda_i = \lambda_{i+1} \}$,  and so  $J$  can be used to describe the presence of multiplicities amongst the  $\lambda_i$.  These shuffles correspond exactly to the elements of $W^\lambda$.  The set of column lengths that may possibly occur in the Young diagram of  $\lambda$ is  $[n-1]$.  Since  $J$  is the set of ``missing'' column lengths,  comparing to Section 2 we have  $J = [n-1] - Q_\lambda$.

The Weyl character formula for the coordinatization of $char_\lambda(w_0;x)$  for $sl_n(\mathbb{C})$ is the bialternant definition of the Schur function  $s_\lambda(x)$.  So $s_\lambda(x) = \sum_{T \in \mathcal{T}_\lambda} x^T$ implies the dimension $m_\lambda(w_0,\mu)$  is the number of tableaux  $T$  such that $c_i$ is the  $i^{th}$ coefficient of  $\mu$  with respect to the  $\{ \phi_i \}$  spanning set when the coefficients of $\mu$ are required to sum to  $| \lambda |$.  Let  $T^+$  be the tableau of shape  $\lambda$  that has  $T^+(j,i) = i$  for $1 \leq i \leq n$  and  $1 \leq j \leq \lambda_i$.  Here $x^{T^+}$ is the coordinatization  $x_1^{\lambda_1} \cdots x_{n-1}^{\lambda_{n-1}}x_n^0$  of $e^\lambda$.

Consider a composition  $\alpha \in W\lambda$.  Let  $w \in W^\lambda$  be of length  $k \geq 0$  such that  $w.\lambda = \alpha$.  Let  $s_{i_k}...s_{i_2}s_{i_1}$  be a reduced decomposition for  $w$, and find $\pi = \pi_w$.  To relate to the ``right action'' of \cite{RS1},  note that  $\lambda_i = \alpha_{\pi_i}$  for  $1 \leq i \leq n$.  Now identify each value $1 \leq i \leq n$  of an  $n$-semistandard tableau  $T$  of shape  $\lambda$  with the formal exponential  $x_i$.  Define  $w.T^+$  to be the result of replacing each value  $i$  by  $\pi_i$  and resorting the values within each column so that they increase from north to south. Clearly  $w.T^+ = Y_\lambda(\pi)$.  The combinatorial weight $x^{w.T^+}$ of this tableau is the coordinatization of $e^{w\lambda}$.  In \cite{RS1} the ``key'' $key(\alpha)$ of the composition $\alpha$ is defined to be the tableau whose westernmost  $\alpha_j$  columns contain the value $j$ for $j \geq 1$.  Taking  $j := \pi_i$  for a given  $i \geq 1$,  one sees that the tableau  $w.T^+$ satisfies that definition.  Hence  $Y_\lambda(\pi) = key(\alpha)$.

It is not hard to see that the coordinatization of the Demazure character formula above is our definition (when $\lambda_n = 0$) of the Demazure polynomial $d_\lambda(\pi;x)$ in Section 2.  The dimension $m_\lambda(w, \mu)$ is the number of Demazure tableaux $T$ such that $c_i$ is the $i^{th}$ coefficient of $\mu$.

Since the reductive Lie algebra  $gl_n(\mathbb{C})$  is not semisimple,  its Demazure modules are rarely considered in geometric or algebraic papers.   However, its coordinatized characters have some aesthetic advantages over those for $sl_n(\mathbb{C})$.   Since the scalar matrices are in the center of  $gl_n(\mathbb{C})$,  the familiar constructions (such as with tensors or global sections of line bundles on  $SL_n(\mathbb{C})/B$ )  of an  $sl_n(\mathbb{C})$  module  $V_\lambda$  may be readily extended to $gl_n(\mathbb{C})$.   Once the relation  $\phi_1 + \phi_2 + ... + \phi_n = 0$  is no longer present,  the finite dimensional irreducible polynomial characters of  $gl_n(\mathbb{C})$  are indexed by the Young diagrams for  $n$-partitions:   for each column of length  $n$  in the shape of a  $\lambda \in \Lambda_n^+$,  the formal character has a factor of $x_1x_2 \cdots x_n$.   Every Schur function  $s_\lambda(x)$  for  $\lambda \in \Lambda_n^+$  now arises as a formal character for  $gl_n(\mathbb{C})$.  Since the underlying vector spaces for the modules are unchanged,  their structure with respect to  $W \cong S_n$  remain the same. Extend $B$  to the subalgebra  $B^\prime$  of all upper triangular matrices in  $gl_n(\mathbb{C})$,  consider highest weight vectors  $v$  for all  $n$-partitions  $\lambda \in \Lambda_n^+$,  and construct  $\mathcal{U}(B^\prime).wv$  for any  $w \in W$.   Now that the condition  $\lambda_n = 0$  has been removed,  \textit{all} of the Demazure polynomials  $d_\lambda(\pi;x)$  considered in this paper arise as the formal characters for such ``polynomial'' Demazure modules of  $gl_n(\mathbb{C})$  as the initial monomial  $x_1^{\lambda_1}x_2^{\lambda_2} \cdots x_n^{\lambda_n}$  ranges through all  $\lambda \in \Lambda_n^+$.   Columns of length  $n$  in a tableau  $T \in \mathcal{T}_\lambda$  must contain the values  $1, 2, ..., n$.   It can be seen that such columns are combinatorially inert in this paper.   Hence our Theorem 10.1 may be applied to the Demazure characters for  $sl_n(\mathbb{C})$  by requiring  $\lambda$  to be an  $(n-1)$-partition and invoking the relation  $x_1x_2 \cdots x_n = 1$.

In the semisimple  $L$  and coordinatized  $sl_n(\mathbb{C})$  discussions above,  to avoid redundant considerations of cases we required  $w \in W^\lambda$.  A permutation $\pi \in S_n$  corresponds to a  $w \in W^\lambda$  if and only if  $\pi \in S_n^\lambda$.   The criteria for having some redundant $w$ is the same for the  $gl_n(\mathbb{C})$ case (when  $\lambda$  is any  $n$-partition) as for the  $sl_n(\mathbb{C})$  case (when  $\lambda_n = 0$):  whether any of the parts of  $\lambda$  are repeated.  Let $\pi, \pi^\prime \in S_n$.  By Proposition 2.4.4 of [BB], any $w \in W$ can be uniquely factored as $w = w_2w_1$ such that $w_1 \in W_J$ and $w_2 \in W^\lambda$.  Creating $Y_\lambda(\pi)$ from $\pi$ is essentially projecting $\pi$ to $S_n^\lambda$:  This proposition can be used to show that $Y_\lambda(\pi) = Y_\lambda(\pi^\prime)$ if and only if $w_2 = w_2^\prime$.  It can also be used to show that $c_\lambda(\pi;x) = 0$ if and only if $\pi \notin S_n^\lambda$.  For a fixed  $\lambda$,  let $\pi, \pi^\prime \in S_n^\lambda$  correspond to  $w,w^\prime \in W^\lambda$.   Then  $w^\prime \leq w$  in the Bruhat order on  $W^\lambda$ if and only if  $Y_\lambda(\pi^\prime) \leq Y_\lambda(\pi)$  by Theorem 2.6.3 of \cite{BB}.   This gives the restatement  $d_\lambda(w;x) = \sum c_\lambda(w;x)$,  sum over all  $w^\prime \in W^\lambda$  such that  $w^\prime \leq w$  in the Bruhat order on  $W^\lambda$.  As $\lambda$ runs through $\Lambda_n^+$, the union of the orbits $\{ w.\lambda \hspace{1mm} | \hspace{1mm} w \in W^\lambda \}$ is $\mathbb{N}^n$.  Here the correspondence $w \mapsto \pi_w$ can be used to describe a bijection from $\Lambda_n^+ \rtimes S_n^\lambda$ to $\mathbb{N}^n$.

The notions of right and left keys of a semistandard tableau are related to the lifting criterion of Lakshmibai, Musili, and Seshadri for standard monomials in Type A.  See Section 3 of \cite{RS2} and Section 12.8 of \cite{LB}.  The notions of right and left keys have been generalized to analogous constructions for all semisimple Lie algebras and Kac-Moody algebras:  The right (left) keys are the initial (final) directions of Littelman's Lakshmibai-Seshadri paths.  See Lenart's Remark 5.3 in \cite{Le2} for the details in the general Lenart-Postnikov alcove path model or Proposition 3.4.3 of \cite{Fer} for Type A.  The left key plays a role for ``opposite Demazure'' modules that is analogous to the role played by the right key for Demazure modules. The generating procedure on p. 281 of \cite{Le1} produces the ``Demazure crystal graph'' whose vertices are the tableaux in $\mathcal{D}_\lambda(\pi).$

\vspace{1pc}\noindent \textbf{Acknowledgments.}  The first author thanks Soichi Okada and Masao Ishikawa for organizing the August 2012 RIMS conference on Young tableaux.  Alain Lascoux's remarks there inspired the results of this paper beyond Theorem 8.1, which had appeared in \cite{Wi1}.  He kindly sent us several helpful messages in the months following the conference, as we were writing this paper.  We thank Shrawan Kumar, David Lax, and Joseph Seaborn for various remarks.  We also thank the two referees for their helpful thorough reports, one of which supplied the missing restriction ``$\pi \in S_n^\lambda$'' for Theorem 3.2 and Corollaries 10.4 and 10.6.

\newpage
\begin{spacing}{1.125}

\begin{center}

Minor Improvements for

\textbf{``Semistandard Tableaux for Demazure Characters \\ (Key Polynomials) and Their Atoms''}

by Robert A. Proctor and Matthew J. Willis

July 5, 2017

\end{center}

\noindent\textbf{(1)  Specification of scanning tableau  $\mathbf{\emph{S(T)}}$}

\vspace{1pc}\noindent In some places in this paragraph on p. 8 the notation was not as precise as it should have been.  Below red ink is used to indicate six insertions which make the notation more precise and the presentation clearer.

\vspace{1pc}Let $1 \leq l \leq \lambda_1$.  Create $T^{(l, \zeta_l)}$ \color{red} and $\lambda^{(l, \zeta_l)}$ \color{black} from $T$ by removing the first $l-1$ columns from $T$ and $\lambda$, but retain the column indexing.  We compute the values in the $l^{th}$ column of $S(T)$ from $(l, \zeta_l)$ upwards:  Consider the column bottom values $T^{(l,\zeta_l)}(h, \zeta_h)$ for $l \leq h \leq \lambda_1$ as a sequence, and find its EWIS.  The sequence of locations that contain the values of this EWIS is the \emph{scanning path} for this location; it is denoted $P(T;l,\zeta_l)$.  The first member of $P(T;l,\zeta_l)$ is the location $(l,\zeta_l)$.  Begin to create \color{red} the $l^{th}$ column of \color{black} $S(T)$ by defining the value $S(T;l,\zeta_l)$ to be the last value in this EWIS.  Next remove the boxes in $P(T; l, \zeta_l)$ from $\lambda\color{red}^{(l, \zeta_l)}$ \color{black} and their values from $T\color{red}^{(l, \zeta_l)}$ \color{black} to form what can be seen to be a smaller shape \color{red} $\lambda^{(l, \zeta_l-1)}$ \color{black} and a \textit{remnant} tableau $T^{(l,\zeta_l-1)}$.  Since $T^{(l,\zeta_l-1)}$ is semistandard, we may apply $S(\cdot)$ to it.    As $k$ decrements from $\zeta_l - 1$ to 1, continue to perform this process using the bottom values in the $l^{th}$ through $\lambda_1^{th}$ columns of the diminishing $T^{(l,k)}$ to produce the other $\zeta_l - 1$ scanning paths that originate in the $l^{th}$ column.  For such $k$, the path constructed with the selected column bottoms of $T^{(l,k)}$ is denoted $P(T;l,k)$, and $S(T;l,k)$ is defined to be the value in its final location.  Note that $S(T;l,k)$ is the largest of the column bottom values in $T^{(l,k)}$, i.e. the largest value in $T^{(l,k)}$.  Apply this process to all of the columns of $T$ to obtain the scanning value $S(T; l,k)$ for every $(l,k) \in \lambda$.  Define $U^{(l,k)}$ to be the tableau produced by removing the leftmost remaining column from $T^{(l,k)}$ and $\lambda\color{red}^{(l,k)}$.  \color{black} To summarize, with the second equality giving the form used in Sections 5 - 7:

\vspace{2pc}\noindent\textbf{(2)  Added comment to the Appendix}

\vspace{1pc}\noindent  The following observation could have been made between the $7^{th}$ and $8^{th}$ sentences in the paragraph that straddles pp. 15-16:

\vspace{1pc}\noindent ``The $n$-permutations in $S_n^\lambda$ can be used to depict the minimal length coset representatives in $W^\lambda$ here.''

\end{spacing}

\end{document}